\documentclass[12pt,reqno]{amsart}
\usepackage[utf8]{inputenc}
\usepackage[english]{babel}
\usepackage{comment}
\usepackage{amsmath,amssymb,amsfonts,amsbsy,amsthm,amscd,latexsym,graphicx}
\usepackage{empheq,textcomp}
\usepackage{xcolor}
\usepackage{hyperref}

\theoremstyle{definition}
\newtheorem{theorem}{Theorem} [section]
\newtheorem{corollary}[theorem]{Corollary}
\newtheorem{lemma}[theorem]{Lemma}

\newtheorem{definition}[theorem]{Definition}

\newtheorem{remark}[theorem]{Remark}
\newtheorem{example}[theorem]{Example}

\numberwithin{equation}{section}

\AtEndDocument{
  \par
  \bigskip
  \begin{tabular}{@{}l@{}}%
    \text{Department of Mathematics, University of Oregon, Eugene, OR 97403-1222, USA}\\
    \textit{E-mail address}: \texttt{putingyu@uoregon.edu}\\
  \end{tabular}}

\newcommand{\C}{\mathbb{C}}

\newcommand{\Ec}{{\mathcal{E}}}
\newcommand{\Fc}{{\mathcal{F}}}
\newcommand{\Gc}{{\mathcal{G}}}
\newcommand{\N}{\mathbb{N}}
\newcommand{\R}{\mathbb{R}}

\newcommand{\Z}{\mathbb{Z}}

\newcommand{\Eq}{\, = \,}

\newcommand{\Le}{\, \le \,}

\newcommand{\qeddef}{{\quad $\diamondsuit$}}
\newcommand{\qeddeff}{{\qquad \diamondsuit}}

\newcommand{\bigabs}[1]{\bigl|\,#1\,\bigr|}

\newcommand{\ip}[2]{\langle\,#1,#2\,\rangle}
\newcommand{\bigip}[2]{\bigl\langle \,#1, \, #2 \,\bigr\rangle}

\newcommand{\norm}[1]{\|\,#1\,\|}
\newcommand{\bignorm}[1]{\bigl\|\,#1\,\bigr\|}
\newcommand{\Bignorm}[1]{\Bigl\|\,#1\,\Bigr\|}

\newcommand{\bigparen}[1]{\bigl(\,#1\,\bigr)}
\newcommand{\Bigparen}[1]{\Bigl(\,#1\,\Bigr)}

\newcommand{\set}[1]{\{#1\}}
\newcommand{\bigset}[1]{\bigl\{#1\bigr\}}

\newcommand{\clspan}{{\overline{\text{span}}}}

\newcommand{\inN}{_{n\in\N}}
\newcommand{\sumli}{\sum_{n=1}^\infty}

\begin{document}
\title{Every semi-normalized unconditional Schauder frame in Hilbert spaces contains a frame}
\author{Pu-Ting Yu}

\subjclass[2020]{42C15, 42C40}

\keywords{unconditional Schauder frame, frame, Gabor system, Beurling density}

\date{\today}

\pagestyle{plain}
\maketitle
\begin{abstract}
Let $H$ be an infinite-dimensional Hilbert space.
 We prove that every unconditional Schauder frame for $H$ contains a subsequence that can be normalized to form a frame for $H$. As a consequence, every semi-normalized unconditional Schauder frame contains a frame for $H.$ Here we say that a sequence $\set{x_n}\inN$ in a Hilbert space $H$ is an \emph{unconditional Schauder frame} for $H$ if there exists some sequence $\set{y_n}\inN\subseteq H$ such that 
    $$x=\sumli \ip{x}{y_n}x_n\quad \text{for all }x\in H,$$
    with the unconditional convergence of the series in the norm of $H.$ We say that $\set{x_n}\inN$ is semi-normalized if $m\leq \norm{x_n}\leq M$ for all $n\in \N$ for some positive constants $m,M.$

We then apply our main results to answer several open questions concerning the existence of certain unconditional Schauderf frames. For example, we prove that if a closed subspace of $L^2(\R^d)$ contains $\set{e^{2\pi ib\cdot x}g}_{b\in \Lambda}$ for some infinite uniformly discrete subset $\Lambda$ of $\R^d$ and some nonzero function $g$ in the Feichtinger algebra, it does not admit any unconditional Schauder frames of translates with finitely many generators. We will also show that no Gabor system with the critical lower Beurling density can be an unconditional Schauder frame when the window function belongs to the Feichtinger algebra. Furthermore, we present an example of a compact set of $\R$ which does not admit any unconditional Schauder frames of exponentials with the critical lower Beurling density. All results in this paper apply equivalently to sequences that can be rescaled to form a frame for $H.$ 
\end{abstract}

\section{Introduction}
Let $X$ be a separable infinite-dimensional Banach space. A sequence $\set{x_n}\inN$ is said to admit a \emph{reconstruction formula} if for every $x\in X$ there exists a sequence of scalars $(c_n)\inN$ such that \begin{equation}
\label{reconstruction_formula}
    x=\sumli c_nx_n\quad \text{for all }x\in X
\end{equation}
with the convergence of the norm in $X.$ The concept  of reconstructing elements using such an infinite linear combinations associated with a sequence can be traced back to \cite{JS27}, where Schauder introduced the notion of Schauder bases an infinite-dimensional analogue of Hamel bases. Here we say $\set{x_n}\inN$ is a \emph{Schauder basis} if the series representation (\ref{reconstruction_formula}) is unique for all $x\in X.$
The question as to which sequences admit a reconstruction formula, and under what conditions such a formula possesses specific convergence properties, have been pivotal in various branches of mathematics, including Banach space theory, approximation theory, harmonic analysis, et al., as well as in the field of applied mathematics. To be more precise, the problem of constructing sequences that admit a reconstruction formula with desired properties can be further refined into several layers of inquiry: 
(I) Control of coefficients: Beyond the validity of the reconstruction formula, does there exist an associated sequence $\set{y_n}\subseteq X^*$ such that those coefficients $c_n$ can be realized as $y_n(x)$ for all $n\in\N$?; (II) Unconditionality of the summation: The convergence of the series in Equation (\ref{reconstruction_formula}) may fail under a different enumeration of the sequence $\set{x_n}\inN$. This naturally leads to ask under what conditions is the convergence of the series in Equation (\ref{reconstruction_formula}) independent of the order of the summation?; (III) Uniqueness of the reconstruction formula: Under what conditions is the series representation (\ref{reconstruction_formula}) unique for every $x\in X?$ Freeman et al.\ proved in \cite{FPT21} that one can always construct a sequence that admits a reconstruction formula satisfying (I) out of a dense subset of $X$ if $X$ has a Schauder basis.
However, it has been shown, in general, constructing sequences that has a specific form and admits a reconstruction formula satisfying (I) and (II) is a highly nontrivial task (\cite{OSSZ11},\cite{FOSZ14},\cite{LT25b},\cite{LT25c}). In fact, it has been shown in various settings that the construction of sequences of certain forms that admit a reconstruction formula fulfilling (I)--(II) or (I)--(III) is impossible (\cite{OSSZ11},\cite{FOSZ14},\cite{LT25},\cite{LT25d},\cite{DH00},\cite{PY25a}).

In the setting of Hilbert spaces, Duffin and Schaeffer introduced a novel approach in \cite{DS52} to constructing sequences (what they called as \emph{frames}) that admits a reconstruction formula satisfying (I) and (II). A sequence $\set{x_n}\inN$ in a Hilbert space $H$ is a frame if there exist positive constants $A$ and $B$, called frame bounds, such that \begin{equation}
\label{frame_ineq}
A\, \norm{x}^2
\Le \sum_{n=1}^\infty |\ip{x}{x_n}|^2
\Le B\, \norm{x}^2,
\qquad\text{for all } x \in H.
\end{equation}
A sequence $\set{x_n}$ is said to be a \emph{Bessel sequence} in $H$ if at least the upper inequality of Equation (\ref{frame_ineq}) is satisfied. It was shown in the same paper that for every frame $\set{x_n}\inN$ there exists another sequence $\set{y_n}\inN$, called the \emph{canonical dual frame}, such that 
\begin{equation}
\label{frame_reconstruction_formula}
x=\sumli\ip{x}{y_n}x_n\quad \text{for all }x\in H,
\end{equation}
with the unconditional convergence of the series in the norm of $H$. Here we say an infinite series converges unconditionally if it converges regardless of the order of the summation. Since its rediscovery in the 1980s, a substantial body of research has investigated the existence of frames with specific forms (\cite{KN15},\cite{ACMT17},\cite{BZ20},\cite{BKL23}, see also \cite{Hei11},\cite{Chr16}\,and\,\cite{OU16} for relatively recent textbook recountings and references therein). Nevertheless, it has also been shown that frames of certain forms do not exist (\cite{CDH99},\cite{AFK14},\cite{KNO23},\cite{PY25c},\cite{PY25d},\cite{EV25}). As a consequence, the search for sequences of the same form with ``slightly weaker" frame-like properties has emerged as another major question in this direction.
Sequences in Hilbert spaces that admit a reconstruction formula can be further classified into four families, as illustrated by the hierarchical structure below
\begin{equation}
\label{hier_structure}
\text{Riesz bases} ~\subseteq~ \text{Frames}  ~\subseteq~ 
\begin{tabular}{l}
    \text{ Unconditional}  \\
     \text{Schauder frames}
\end{tabular}~\subseteq~ \text{Schauder frames}.
\end{equation}
Here \emph{Riesz bases} are frames that are also Schauder bases. We call a sequence $\set{x_n}\inN$ a \emph{Schauder frame} for $H$ if there exists some sequence $\set{y_n}\inN$, called \emph{associated coefficient functionals}, such that the series in Equation (\ref{frame_reconstruction_formula}) converges in the norm of $H.$ 
If the convergence of the series in Equation (\ref{frame_reconstruction_formula}) is unconditional, then we say $\set{x_n}\inN$ is an \emph{unconditional Schauder frame.}
The inclusion relationships between these families then naturally raise the following question: if a specific form of frame does not exist, does there exist an unconditional Schauder frame for $H$ of that form? Our main result below shows that, in general, the answer is negative.
\begin{theorem}
\label{non_exist_frame_implies_unschfr}
    Let $S\subseteq H$ be a subset for which there exist some positive constants $m,M$ such that $m\leq \norm{x}\leq M$  for all $x\in S$. Assume that there exists no frame for $H$ consisting elements from $S$. Then there does not exist any unconditional Schauder frame for $H$ consisting elements from $S$. \qeddef
\end{theorem}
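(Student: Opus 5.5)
We argue by contraposition. Suppose $\set{x_n}\inN\subseteq S$ is an unconditional Schauder frame for $H$ with associated coefficient functionals $\set{y_n}\inN$, and write $c_n(x)=\ip{x}{y_n}$. We aim to extract a subsequence of $\set{x_n}\inN$ that is a frame for $H$. This contradicts the hypothesis on $S$. The plan has three steps: (i) show that the coefficients are square-summable uniformly; (ii) derive a lower frame inequality for the whole sequence $\set{x_n}\inN$; (iii) thin the sequence to obtain the upper (Bessel) inequality while keeping a lower bound.

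For step (i), unconditional convergence together with the uniform boundedness principle gives a constant $K$ with
\[
\Bignorm{\sum_{n\in F} \varepsilon_n c_n(x) x_n}\Le K\norm{x}
\]
for all finite $F\subseteq\N$, all signs $\varepsilon_n=\pm1$ and all $x\in H$. Averaging the square of this over independent random signs and using orthogonality of Rademacher functions (equivalently, the parallelogram law iterated; this is the cotype-$2$ / Orlicz property of $H$) gives
\[
\sum_n |c_n(x)|^2\norm{x_n}^2 \Le K^2\norm{x}^2 .
\]
Since $\norm{x_n}\ge m$, it follows that $\sum_n|\ip{x}{y_n}|^2\le (K/m)^2\norm{x}^2$, so $\set{y_n}\inN$ is a Bessel sequence.

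For step (ii), apply Cauchy--Schwarz to $\norm{x}^2=\sum_n c_n(x)\ip{x_n}{x}$:
\[
\norm{x}^2\Le \frac{K}{m}\norm{x}\Bigparen{\sum_n|\ip{x}{x_n}|^2}^{1/2}.
\]
Hence $\set{x_n}\inN$ satisfies the lower frame inequality with constant $A=m^2/K^2$. All constants here depend only on $m$ and $K$, and the upper bound $\norm{x_n}\le M$ is available for step (iii).

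The main obstacle is step (iii): $\set{x_n}\inN$ itself need not be Bessel (for example, it could contain many repeated elements), so we must select a subsequence that is Bessel and still satisfies a lower bound. I would pass to a finite-dimensional, blockwise selection. Fix an orthonormal basis $\set{e_k}$ and an increasing exhaustion $H_j=\text{span}\set{e_1,\dots,e_j}$. By unconditional convergence, choose disjoint finite sets $F_j$ such that the truncated operators $x\mapsto\sum_{n\in F_j}c_n(x)x_n$ nearly reproduce $P_{H_j}$ up to small errors, with the tails controlled. Inside each $F_j$, use the bound $\sum_{n\in F_j}|c_n(x)|^2\norm{x_n}^2\le K^2\norm{x}^2$ to control the positive operator $\sum_{n\in F_j}\norm{x_n}^2\, y_n\otimes y_n$. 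I would then apply a Kadison--Singer / Marcus--Spielman--Srivastava type partition (or, failing that, a greedy pigeonhole removal of near-duplicate vectors) to pick $G_j\subseteq F_j$ such that $\set{x_n}_{n\in G_j}$ has upper frame bound bounded independently of $j$. The lower bound should survive because the Cauchy--Schwarz argument of step (ii) only needs a fixed proportion of the coefficient energy, and that proportion is retained by the selection. Finally, arrange the blocks $G_j$ to be almost orthogonal across $j$ by taking the sequence sufficiently lacunary, and check that the union $\bigcup_j G_j$ is Bessel with a lower bound comparable to $A$. Thus the union is a frame with elements in $S$, giving the contradiction.
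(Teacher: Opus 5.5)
There is a genuine gap in step (iii). Your steps (i) and (ii) are correct: they are Orlicz's theorem for $\set{\norm{x_n}y_n}\inN$ and the lower frame inequality it implies. But they are the easy part, and they cannot suffice on their own. The family $\set{e_1+e_k}_{k\ge 2}$ is semi-normalized and pairwise $\sqrt{2}$-separated. It satisfies the lower frame inequality with constant $1$: the sum $\sum_k|\ip{x}{e_1+e_k}|^2$ is infinite if $\ip{x}{e_1}\neq 0$ and equals $\norm{x}^2$ otherwise. Yet every Bessel subfamily is finite because of the $e_1$-direction, so none is a frame. Hence the reconstruction must enter step (iii) essentially, and removing near-duplicates cannot help.

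The key sentence of (iii), that the lower bound survives because a fixed proportion of the coefficient energy is retained, is false. Let $\set{x_n}$ run through $N_k$ copies of $e_k$ for every $k$, with $y_n=e_k/N_k$ on those copies, $N_1=1$ and $N_k\to\infty$. This is an unconditional Schauder frame with $K=m=1$. Suppose $\set{x_n}_{n\in G}$ has Bessel bound $B'$. Then $G$ keeps at most $B'$ copies of each $e_k$, so $\sum_{n\in G}c_n(e_k)\ip{x_n}{e_k}\le B'/N_k$. The pairing your Cauchy--Schwarz argument needs therefore retains a vanishing share of $\norm{e_k}^2$, and (ii) certifies a lower bound of at most $(B'/N_k)^2$, even though one copy per block is an orthonormal basis. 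Normalizing by $\sum_{n\in G}|c_n(x)|^2$ instead of the global bound does not help: at $x=N_k^{-1/2}e_1+e_k$ it certifies at most $(1+B')^2/N_k$.

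An MSS/Weaver partition of the unweighted block $\set{x_n}_{n\in F_j}$ cannot repair this either. Let $B_j$ be the norm of $\sum_{n\in F_j}x_n\otimes x_n$. The partition only gives pieces within $C\sqrt{M^2B_j/r}$ of $r^{-1}\sum_{n\in F_j}x_n\otimes x_n$. An upper bound of order $M^2$ forces $r\gtrsim B_j/M^2$. Then the error is of order $M^2$, while the weakest directions of $r^{-1}\sum_{n\in F_j}x_n\otimes x_n$ are only of order $AM^2/B_j$. Likewise, cross-block almost orthogonality cannot be arranged by choosing lacunary blocks, since the vectors are fixed.

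The missing idea is a reweighting before any selection, which is how the paper proceeds. It first applies Tselishchev's theorem (Theorem~\ref{unsch_scalable_lemma}): there are scalars $c_n$ with $\set{c_nx_n}$ and $\set{\bar c_n^{-1}y_n}$ both frames. This factorization is exactly what a Cauchy--Schwarz lower bound needs (in the example, $c_n=N_k^{-1/2}$), and the upper frame bound gives $|c_n|^2\norm{x_n}^2\le B$.

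Only then is a Kadison--Singer type selection used, on the well-conditioned operator $\sum_n|c_n|^2\norm{x_n}^2T_n$, where $T_n$ is the rank-one projection onto $x_n$ scaled by $B^{-1}$. Lemma~\ref{sampling_lemma} converts dyadic expansions of the weights into multiplicities. It then uses Bownik's selector form of $\text{KS}_2$ to pick one selector whose unweighted sum, divided by $2^\beta$, approximates the block operator from both sides. Lemma~\ref{upper_bdd_cardinality_control} glues the blocks through $M_j=(H_{j+1}\oplus H_{j+2})^\perp$, trace-tail control and Lemma~\ref{elementary_Hilbert_result}. The result is a map $\sigma$ with multiplicities at most $L|c_n|^2\norm{x_n}^2\le LB$ such that $\set{x_{\sigma(k)}/\norm{x_{\sigma(k)}}}$ is a frame. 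Deleting repetitions costs at most a factor $LB$ in the lower bound, and semi-normalization lets you drop the normalization.

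So the tool you name is the right one, but it must act on the rescaled frame. Your sketch has no substitute for the rescaling theorem.
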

Theorem \ref{non_exist_frame_implies_unschfr} is optimal in the sense that the assumption that $m\leq \norm{x}\leq M$  for all $x\in H$ is indispensable. For example, let $S=\set{\frac{1}{n}e_n,ne_n}\inN$, where $\set{e_n}\inN$ is an orthonormal basis for $H.$ Then there exist no frame consisting of elements of $S$, both $\set{\frac{1}{n}e_n}\inN$ and $\set{ne_n}\inN$ form unconditional Schauder frames for $H.$ 

Due to absence of the frame inequality (\ref{frame_ineq}), there is typically a technical gap when attempting to disprove the existence of an unconditional Schauder frame of certain form. With our main result, we are now able to resolve several open problems regarding the existence of certain unconditional Schauder frame for $H$. It was conjectured by Olson and Zalik in \cite{OZ92} that no sequence in $L^2(\R^d)$ consisting of pure translates of a nonzero function can be a Schauder basis for $L^2(\R^d).$ This conjecture remains open at the time of writing. It was recently confirmed by Lev and Tselishchev in \cite{LT25} that there does not exist any unconditional Schauder frame for $L^2(\R^d)$ consisting of translates of finitely many functions. We improve their result to a broader family of closed subspaces of $L^2(\R^d).$ 
For further background on the Olson–Zalik conjecture and the terminology used in the theorem below, see Section \ref{OZconjecture}. We say that a subset $\Gamma$ of $\R^d$ is \emph{uniformly discrete} if $\inf\limits_{\substack{a,b\in\Gamma}}\norm{a-b}\geq \delta $ for some $\delta>0$ for any two distinct $a,b$.
\begin{theorem}
    Let $g$ be a nonzero function in the Feichtinger algebra $M^1(\R^d)$. Suppose that $M\subseteq L^2(\R^d)$ is a closed subspace containing $\set{e^{2\pi ib\cdot x}g(x)}_{b\in \Gamma}$ for some infinite uniformly discrete subset $\Gamma\subseteq\R^d$. Then $M$ does not admit any unconditional Schauder frame consisting of translates of finitely many functions. \qeddef
\end{theorem}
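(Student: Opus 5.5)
We argue by contradiction, combining Theorem \ref{non_exist_frame_implies_unschfr} (or rather its underlying form: every unconditional Schauder frame contains a subsequence that can be normalized to a frame) with a known obstruction for frames of translates.

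Suppose $\set{T_{a}f_j : a\in\Lambda_j,\ 1\le j\le N}$ is an unconditional Schauder frame for $M$, where $T_af(x)=f(x-a)$. By the main result applied in the Hilbert space $M$, there is a subsequence $\set{T_{a_n}f_{j_n}}\inN$ and scalars $c_n\neq 0$ such that $\set{c_nT_{a_n}f_{j_n}}\inN$ is a frame for $M$. Since $\norm{T_af_j}=\norm{f_j}$, the original system is semi-normalized once we discard any $f_j=0$. Hence we may take $c_n=1/\norm{f_{j_n}}$, which is bounded above and below. Absorbing these constants, we obtain a frame for $M$ of the form $\set{T_a\tilde f_j}_{a\in\Lambda_j',\,j\le N}$ with $\Lambda_j'\subseteq\Lambda_j$ and $\tilde f_j=f_j/\norm{f_j}$.

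Next we use the Bessel bound to show that frequency concentration is impossible. For each $j$ and every $h\in L^2(\R^d)$,
$$\sum_{a\in\Lambda_j'}|\ip{h}{T_a\tilde f_j}|^2=\sum_{a}|(h*\tilde f_j^{*})(a)|^2 .$$
For the frame upper bound to hold on all of $M$, each $\Lambda_j'$ must be relatively uniformly discrete, i.e.\ a finite union of uniformly discrete sets. We intend to use the standard argument: the upper frame bound forces $\sup_x\#(\Lambda_j'\cap(x+[0,1]^d))<\infty$ as soon as $\tilde f_j$ is not annihilated by $M$. Then we test the lower frame bound on $h_b=e^{2\pi i b\cdot x}g\in M$ for $b\in\Gamma$, which satisfy $\norm{h_b}=\norm{g}>0$. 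The lower frame bound gives
$$A\norm{g}^2\le \sum_{j=1}^N\sum_{a\in\Lambda_j'}|\ip{M_bg}{T_a\tilde f_j}|^2,$$
where $M_bg=e^{2\pi ib\cdot x}g$. We then sum this over $b$ in a finite subset $F\subseteq\Gamma$, so that the left side grows like $|F|$. On the right we rewrite the inner products as short-time Fourier transform values, $\ip{M_bg}{T_a\tilde f_j}=V_{\tilde f_j}\,\overline{\cdots}$, evaluated at points $(a,b)$ with $a\in\Lambda_j'$ and $b\in\Gamma$.

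The key step, and the expected main obstacle, is to show that the right side grows strictly slower than $|F|$, or at least fails to be comparable to it, which yields the contradiction. Because $g\in M^1(\R^d)$, the function $V_g\tilde f_j=\overline{V_{\tilde f_j}g}$ lies in a Wiener amalgam space $W(L^\infty,L^2)$. The sum over the lattice-like set $\Lambda_j'\times\Gamma$ is therefore bounded by $C\norm{g}_{M^1}\norm{\tilde f_j}_2$ times amalgam norms, uniformly in $|F|$, since $\Gamma$ is uniformly discrete. More precisely, we want
$$\sum_{b\in\Gamma}\sum_{a\in\Lambda_j'}|V_g\tilde f_j(a,b)|^2\le C\,\norm{\tilde f_j}_2^2\,\norm{g}_{M^1}^2 <\infty .$$
This estimate follows from the sampling inequality $\sum_{\text{sep. pts}}|V_gf|^2\lesssim\norm{V_gf}^2_{W(L^\infty,L^2)}\lesssim\norm{g}_{M^1}^2\norm{f}_2^2$ applied to the relatively separated set $\Lambda_j'\times\Gamma\subseteq\R^{2d}$. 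Summing the lower bound over all of $\Gamma$ then gives $\infty=A\norm{g}^2\cdot\#\Gamma\le\sum_j C\norm{g}^2_{M^1}<\infty$, since $\Gamma$ is infinite, which is a contradiction. The delicate point is establishing the relative separation of $\Lambda_j'$. If that is hard to get directly, the fallback is to split $\Lambda_j'$ into unit cubes and use the Bessel bound on $M$ together with $M_bg\in M$ to bound the multiplicities.
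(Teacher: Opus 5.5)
Your proposal is correct. After the first step it follows a genuinely different, more self-contained route than the paper.

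\textbf{Shared first step.} You and the paper both apply Theorem \ref{characterization_uncondition_Scf} in the Hilbert space $M$. This turns the semi-normalized system of translates into a frame $\set{T_a\tilde f_j}_{a\in\Lambda_j',\,j\le N}$ for $M$.

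\textbf{The paper's route.} The paper proves Corollary \ref{non_existence_unschf_subspaces} via Theorem \ref{distance_criterion}. It uses the Bessel property of $\Gc(g,\set{0}\times\Gamma)$ and Feichtinger's theorem to extract an infinite Riesz sequence $\Gc(g,\set{0}\times\Gamma')$. It then invokes the external comparison result \cite[Corollary 4.4]{PY25c}. That result forces each point $(0,b)$, $b\in\Gamma'$, to lie within bounded distance of $\bigcup_j\Lambda_j'\times\set{0}$, which is impossible because $\Gamma'$ is unbounded.

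\textbf{Your route.} You test the lower frame bound directly on $M_bg\in M$ and sum over all $b\in\Gamma$, so the left side diverges. Since $|\ip{M_bg}{T_a\tilde f_j}|=|\ip{\tilde f_j}{M_bT_{-a}g}|$, the right side is bounded by $\sum_j$ of the Bessel bounds of $\Gc(g,(-\Lambda_j')\times\Gamma)$ evaluated at $\tilde f_j$. These bounds are finite because $g\in M^1(\R^d)$ and the product set is relatively separated.

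\textbf{What each approach buys.} Your argument needs neither the Riesz-sequence extraction (hence no Kadison--Singer-type input) nor the comparison theorem. It uses only the standard Bessel property of $M^1$ windows over relatively separated sets. The cost is generality. You exploit that the frame consists of pure translates, so the relevant index set is the product $\Lambda_j'\times\Gamma$. The paper's Theorem \ref{distance_criterion} instead gives a geometric statement for general Gabor frames with windows in $M^{q_j}$.

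\textbf{Two points to tidy.}

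First, the step you call delicate is easy, and no fallback is needed. For $x\in\Lambda_j'$ the vector $T_x\tilde f_j$ itself lies in $M$. Choose $\delta>0$ with $|\ip{\tilde f_j}{T_y\tilde f_j}|\ge 1/2$ for $\norm{y}<\delta$. Testing the upper frame bound $B$ on $T_x\tilde f_j$ then gives $\#\bigparen{\Lambda_j'\cap B_\delta(x)}\le 4B$, and relative separation of $\Lambda_j'$ follows.

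Second, $V_g\tilde f_j$ equals $\overline{V_{\tilde f_j}g}$ only up to a unimodular phase and the reflection $(x,w)\mapsto(-x,-w)$. This is harmless, since only moduli and relative separation enter the argument.
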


The lower and upper \emph{Beurling density} associated with a countable subset $\Lambda\subseteq\R^d$ are defined by 
\begin{equation}
\label{Beruling_density}
D^{-}(\Lambda)=\liminf_{r\rightarrow\infty}\inf_{x\in \R^d}\frac{\#\bigparen{\Lambda\cap B_r(x)}}{|B_r(x)|}\quad\text{and}\quad D^{+}(\Lambda)=\limsup_{r\rightarrow\infty}\sup_{x\in \R^d}\frac{\#\bigparen{\Lambda\cap B_r(x)}}{|B_r(x)|},
\end{equation}
respectively. 
It was raised by Olevskii in \cite{Ol21} whether, for every $S\subseteq\R^d$ of finite measure, there exists some countable subset $\Lambda\subseteq \R$ with $D^{-}(\Lambda)=D^{+}(\Lambda)=|S|$ such that $\set{e^{2\pi i\lambda x}}_{\lambda\in \Lambda}$ forms a frame for $L^2(S).$ Enstad and van Velthoven proved that the
asnwer is negative in \cite{EV25} recently. We further show that the answer remains negative for unconditional Schauder frames.

\begin{theorem}
    There exists some compact set $S$ of $\R$ of positive measure such that no countable subset $\Lambda \subseteq\R$ with $D^{-}(\Lambda)=|S|$ yields an unconditional Schauder frame of the form $\set{e^{2\pi i\lambda x}}_{\lambda\in \Lambda}$ for $L^2(S).$ \qeddef
\end{theorem}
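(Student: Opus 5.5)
The plan is to reduce to the frame case and invoke the counterexample of Enstad and van Velthoven \cite{EV25}. Take $S\subseteq\R$ to be the compact set of positive measure from \cite{EV25}, for which no $\Lambda\subseteq\R$ with $D^-(\Lambda)=D^+(\Lambda)=|S|$ (or, as I expect their argument actually gives, no $\Lambda$ with $D^-(\Lambda)=|S|$) yields a frame of exponentials for $L^2(S)$. Suppose, for contradiction, that $\Lambda$ satisfies $D^-(\Lambda)=|S|$ and that $\set{e^{2\pi i\lambda x}}_{\lambda\in\Lambda}$ is an unconditional Schauder frame for $L^2(S)$. Every exponential has norm $|S|^{1/2}$ in $L^2(S)$, so the system is semi-normalized. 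The main theorem (in the form stated in the abstract, that a semi-normalized unconditional Schauder frame contains a frame) then gives a subset $\Lambda'\subseteq\Lambda$ such that $\set{e^{2\pi i\lambda x}}_{\lambda\in\Lambda'}$ is a frame for $L^2(S)$. Note that Theorem \ref{non_exist_frame_implies_unschfr} alone only produces a frame from the set of all exponentials, so the refined subsequence statement is the one needed here.

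Next we control the density of $\Lambda'$. By Landau's necessary density condition, a frame of exponentials for $L^2(S)$ forces $D^-(\Lambda')\geq |S|$. Since $\Lambda'\subseteq\Lambda$, we also have $D^-(\Lambda')\le D^-(\Lambda)=|S|$, so $D^-(\Lambda')=|S|$ exactly. A frame of exponentials is a Bessel sequence, so $\Lambda'$ is relatively uniformly discrete (a finite union of uniformly discrete sets) and has $D^+(\Lambda')<\infty$. It has not yet been shown that $D^+(\Lambda')=|S|$.

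This is the main obstacle. If the result of \cite{EV25} rules out frames only under the two-sided condition $D^-=D^+=|S|$, then we must modify $\Lambda'$. The first thing I would try is to check that their argument really only uses critical lower density: their obstruction is a Balian--Low type or Fourier-analytic statement at the critical lower density, and the upper density enters only through Bessel-type bounds that hold here anyway. If that is not available, I would instead try to thin $\Lambda'$ to a frame subset with equal lower and upper density $|S|$. Removing points cannot decrease $D^-$ below $|S|$ while preserving the frame property, so the delicate part is keeping the frame property while lowering $D^+$. For this I would use the stability of exponential frames under removal of sparse subsets, or a Kadison--Singer / Marcus--Spielman--Srivastava type selection that preserves density bounds.

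Once a frame subset with critical density is obtained, it contradicts the choice of $S$ from \cite{EV25}. This finishes the proof.
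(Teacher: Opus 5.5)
Your argument is essentially the paper's: pass to a frame subset $\Ec(\Lambda')$ via the semi-normalized case of Theorem \ref{characterization_uncondition_Scf}, use Landau's condition together with $\Lambda'\subseteq\Lambda$ to get $D^-(\Lambda')=|S|$, and contradict Enstad--van Velthoven for the Kozma--Nitzan--Olevskii compact set (the paper phrases this step as \cite[Theorem 1.1(i)]{EV25} producing a Riesz basis $\Ec(\Gamma)$ with $\Gamma$ a weak limit of translates, which that set cannot admit). The $D^+$ obstacle you raise does not arise, because that result assumes only $D^-(\Lambda')=|S|$, so your thinning fallback is not needed; as you stated it, it is also unjustified.
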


We also prove that no Gabor system at critical lower density (see Section \ref{applications} for the definition) can simultaneously be an unconditional Schauder frame and possess generators in the Feichtinger algebra.

\begin{theorem}
   Let $\phi_1,\dots,\phi_N$ be functions in the Feichtinger algebra.  There does not exist any countable subsets $\Lambda_1,\dots,\Lambda_N\subseteq \R^{2d}$ with $D^{-}\bigparen{\cup_{i=1}^N\Lambda_i}=1$ such that $$\set{e^{2\pi ib\cdot x}\phi_i(x-a)\,|\,(a,b)\in \Lambda_i, 1\leq i\leq N}$$
   forms an unconditional Schauder frame for $L^2(\R^d)$. Here $\cup_{i=1}^N\Lambda_i$ denotes the disjoint union of $\Lambda_1,\dots\Lambda_N.$  \qeddef
\end{theorem}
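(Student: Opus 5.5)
Granting Theorem \ref{non_exist_frame_implies_unschfr}, the result follows once we show that no \emph{frame} of this form exists. The window functions are in the Feichtinger algebra and the norms are all equal: $\norm{e^{2\pi i b\cdot x}\phi_i(x-a)}=\norm{\phi_i}_2$. Any zero windows can be discarded, since zero vectors contribute nothing to a frame or to an unconditional Schauder frame, and discarding them can only lower the density. So the set
$$S=\set{e^{2\pi ib\cdot x}\phi_i(x-a)\,|\,(a,b)\in\R^{2d},\,1\le i\le N}$$
satisfies $m\le\norm{x}\le M$ with $m=\min_i\norm{\phi_i}_2$ and $M=\max_i\norm{\phi_i}_2$.

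Applying Theorem \ref{non_exist_frame_implies_unschfr} directly to $S$ is too crude, because $S$ certainly contains frames, for example lattice Gabor frames of high density. The density restriction has to be carried through. The plan is to look inside the proof of the main theorem rather than only its statement. That proof shows that an unconditional Schauder frame $\set{x_n}$ contained in a semi-normalized set has a \emph{subsequence} $\set{x_{n_k}}$ which is itself a frame for $H$. So if the system indexed by $\Lambda_1,\dots,\Lambda_N$ with $D^-(\cup_i\Lambda_i)=1$ were an unconditional Schauder frame, we would obtain subsets $\Lambda_i'\subseteq\Lambda_i$ such that the multi-window Gabor system over the $\Lambda_i'$ is a frame for $L^2(\R^d)$. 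Passing to subsets does not increase the lower Beurling density, so $D^-(\cup_i\Lambda_i')\le 1$.

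Next we invoke the density theorem for multi-window Gabor frames (Ramanathan–Steger, Christensen–Deng–Heil). It gives $D^-(\cup_i\Lambda_i')\ge 1$, so the density is exactly $1$. We then need the Balian–Low type result at the critical density for windows in $M^1$. For a single window this is the result of Ascensi–Feichtinger–Kaiblinger: a Gabor frame with window in $M^1(\R^d)$ has $D^-(\Lambda)>1$. The multi-window version is Gröchenig–Ortega-Cerdà–Romero (and Enstad–van Velthoven in the non-lattice case). Their deformation and excision arguments show that a frame at critical density with $M^1$ windows would survive removing a point, which contradicts minimality at the critical density. Either way, a frame with $D^-(\cup_i\Lambda_i')=1$ is impossible, and this contradiction proves the theorem.

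The main obstacle is making sure that the extraction step really produces a subsystem, up to scaling by constants that are harmless here, of the same Gabor form. This needs the main result in its ``subsequence'' form, as in the abstract: every unconditional Schauder frame contains a subsequence that can be normalized to a frame. Since all elements coming from the same window already have equal norm, normalization only rescales window $i$ to $\phi_i/\norm{\phi_i}_2$, which is still in $M^1$. A second, minor point is the multi-window critical density statement, which we cite from the literature rather than prove. If only the single-window version were available, we would reduce to it by a vector-valued (super-)frame argument, or cite the general $M^1$ result of Gröchenig–Romero–Ortega-Cerdà.
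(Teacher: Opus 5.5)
Your proposal is correct and follows the paper's proof. Like the paper, you use the subsequence form of the main result (Theorem \ref{characterization_uncondition_Scf} and its semi-normalized corollary) to extract subsets $\Lambda_i'\subseteq\Lambda_i$ whose Gabor system is a frame, note that $D^-$ cannot increase under passing to subsets, and contradict the Ascensi--Feichtinger--Kaiblinger critical-density theorem for $M^1$ windows. The paper cites that theorem directly in its multi-window form, so your detour through the Ramanathan--Steger/Christensen--Deng--Heil density theorem and Gr\"ochenig--Ortega-Cerd\`a--Romero is unnecessary but harmless.
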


It has been shown that the unconditional convergence property endows unconditional Schauder frames with partial frame properties (\cite{Or33},\cite{CCSL02},\cite{SB13},\cite{BFPS24}). As a result, it is generally believed (and conjectured) that one can construct a frame from an unconditional Schauder frame by suitable means. For example, it was conjecture by Stoeva and Balazs in \cite{SB13} that every unconditional Schauder frame and its associated functionals can be simultaneously rescaled to form a frame for $H.$ This conjecture was confirmed to be true by Tselishchev in \cite{AT25} recently. Our theorem below first completely demystifies the constitution of semi-normalized unconditional Schauder frames in Hilbert spaces by showing that they are constructed from a frame together with a collection of additional elements. Here we say a sequence $\set{x_n}\inN$ in $H$ is semi-normalized if there exist some positive constants $m,M$ such that $m\leq \norm{x_n}\leq M$ for all $n\in\N.$ Second, we provides an explicit description of the sequence of scalars needed to rescale a general unconditional Schauder frame into a frame. 
\begin{theorem}
    Every unconditional Schauder frame $\set{x_n}\inN$ for $H$ contains a subsequence that can normalized to form a frame for $H.$ That is, there exists some subsequence $\set{x_{n_k}}_{k\in\N}$ of $\set{x_n}\inN$ such that $\bigset{\frac{x_{n_k}}{\norm{x_{n_k}}}}_{k\in\N}$ is a frame for $H.$ 

    In particular, if $\set{x_n}\inN$ is semi-normalized, then it must contain a frame for $H.$ \qeddef
\end{theorem}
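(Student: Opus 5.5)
The plan is to reduce the statement to a selection problem for an honest frame, and then to carry out the selection with the Marcus--Spielman--Srivastava (Weaver $KS_r$) partition theorem. \textbf{Step 1 (rescaling).} By the theorem of Tselishchev \cite{AT25} confirming the Stoeva--Balazs conjecture, there are scalars $a_n>0$ such that $\set{v_n}\inN$, $v_n=a_nx_n$, is a frame for $H$ with bounds $A,B$. In particular $\norm{v_n}\le\sqrt B$ for all $n$. Also $v_n/\norm{v_n}=x_n/\norm{x_n}$, so it suffices to prove the following: \emph{every frame $\set{v_n}$ contains a subsequence whose normalization is a frame.} (The semi-normalized case then follows immediately, since a normalized frame multiplied by scalars in $[m,M]$ is still a frame.)

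\textbf{Step 2 (dyadic decomposition).} Split $\N=\bigcup_{k\ge0}J_k$ with $J_k=\set{n: 2^{-k-1}\sqrt B<\norm{v_n}\le 2^{-k}\sqrt B}$, and let $S_k=\sum_{n\in J_k}v_n\otimes v_n$. Then $A\,I\le\sum_k S_k\le B\,I$. Rescaling block $k$ to unit vectors multiplies $S_k$ by roughly $4^k/B$. This gives a Bessel family only with bound of order $4^k$, so from each block we must thin out a fraction of about $4^{-k}$ of the vectors while keeping a comparable portion of $S_k$.

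\textbf{Step 3 (selection via Weaver/MSS).} For each $k\ge1$, apply the infinite-dimensional form of the MSS partition theorem (Bownik--Casazza--Marcus--Speegle) to the Bessel family $\set{v_n}_{n\in J_k}$. Its elements have norm at most $2^{-k}\sqrt B$ and its Bessel bound is at most $B$. The theorem yields a partition $J_k=\bigcup_{j=1}^{r_k}J_{k,j}$ with $r_k\approx 4^{k}$ and
\[
\sum_{n\in J_{k,j}}v_n\otimes v_n\le \tfrac{C}{r_k}\,B\,I .
\]
For a \emph{lower} estimate on each piece, I would apply the two-sided version: the uniform-partition statement with $\epsilon$ small, applied to $S_k$ normalized on its range, gives $\bigl\|\sum_{J_{k,j}}v_n\otimes v_n-\tfrac1{r_k}S_k\bigr\|\lesssim 2^{-k}\sqrt{B}\,\norm{S_k}^{1/2}/\sqrt{r_k}$ in the MSS form. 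Pick one index $j_k$ per block and let $F=\bigcup_k J_{k,j_k}$. The normalized vectors $v_n/\norm{v_n}$ for $n\in J_{k,j_k}$ then have frame operator $T_k$ with $T_k\approx \tfrac{4^k}{B r_k}S_k+E_k$, where $\norm{E_k}$ is small.

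\textbf{Step 4 (summing the blocks).} Summing over $k$ gives an upper bound $\sum_k T_k\lesssim \sum_k S_k\le BI$ up to constants, which is where $r_k\asymp 4^k$ is used. Block $k=0$ is kept entirely; it is already Bessel after normalization since its norms are bounded below. The lower bound $\sum_k T_k\gtrsim\sum_k S_k\ge AI$ requires the error terms $E_k$ to be summable in a form that does not destroy the lower bound. \textbf{This is the main obstacle}, because the MSS error is an additive operator-norm error per block, and infinitely many blocks can accumulate.

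To handle it, I would choose the partition parameters with geometrically decaying $\epsilon_k$ (say $\epsilon_k=\epsilon 2^{-k}$ in the paving, taking $r_k$ a larger constant multiple of $4^k$). Alternatively, I would work relative to $S_k$, applying MSS to $S_k^{-1/2}v_n$ on $\overline{\mathrm{ran}}\,S_k$ after grouping blocks so that each $S_k$ is bounded below on a suitable subspace. Either way, the goal is $\sum_k\norm{E_k}<A/2$, which makes $F$ give the required normalized frame.
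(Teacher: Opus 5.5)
Your Step 1 is the same reduction the paper uses: Tselishchev's rescaling, then selection from an honest frame. The semi-normalized consequence is also fine.

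\textbf{The gap in Step 4.} The obstacle you flag in Step 4 is real. It cannot be removed by tuning parameters, and it breaks the upper bound as well as the lower one.

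The two-sided selector/MSS estimate is additive in operator norm. Set $\delta_k=4^{-k}$. The theorem requires $r_k\delta_k<1$ and gives $\bignorm{\sum_{n\in J_{k,j}}v_n\otimes v_n-\frac{1}{r_k}S_k}\le CB\sqrt{\delta_k/r_k}$. After normalizing (multiplying by $\norm{v_n}^{-2}\ge 4^k/B$), this becomes an error of size $C\sqrt{4^k/r_k}=C/\sqrt{r_k\delta_k}>C$ in every block.
\begin{itemize}
\item Renormalizing $S_k$ by its norm gives the same lower bound $C$.
\item Taking $r_k$ a larger multiple of $4^k$ violates $r_k\delta_k<1$.
\end{itemize}
So each of infinitely many blocks carries an additive error of at least an absolute constant, and nothing makes $\sum_k E_k$ bounded.

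Your claim $\sum_kT_k\lesssim\sum_kS_k$ would need a multiplicative bound $T_k\lesssim\frac{4^k}{Br_k}S_k$, which Step 3 does not give. Here is a concrete failure. Let $J_k=\set{n_k}$ with $v_{n_k}=2^{-k}\sqrt{B}\,e$ for a fixed unit vector $e$. The piece containing $v_{n_k}$ satisfies the selector estimate. An arbitrary choice of $j_k$ may pick that piece for every $k$, so the normalized family contains $e$ infinitely often and is not even Bessel.

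The relative version (MSS applied to $S_k^{-1/2}v_n$) also fails in general. It needs the quantities $\ip{S_k^{-1}v_n}{v_n}$ to be uniformly small, but they equal $1$ for mutually orthogonal vectors in a block. Moreover, $S_k$ need not be bounded below on its range.

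\textbf{The missing idea.} What you need is a way to localize the per-block errors. The paper gets this from the Freeman--Speegle/Bownik orthogonal decomposition in Lemma~\ref{upper_bdd_cardinality_control}.
\begin{itemize}
\item Instead of dyadic norm classes, split $\N$ into finite consecutive blocks $I_j$.
\item Build mutually orthogonal subspaces $H_j$ so that the operators of block $I_j$ act inside $H_1\oplus\cdots\oplus H_{j+2}$ and have trace at most $\eta_j$ on $H_1\oplus\cdots\oplus H_j$, with $6\sqrt{\eta_j}=2^{-j}\epsilon$.
\item The sampling lemma combines Bownik's selector theorem with choosing a selector of small trace on $M_j=(H_{j+1}\oplus H_{j+2})^\perp$. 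It gives a per-block error of $\frac{\epsilon}{2}P_{H_{j+1}\oplus H_{j+2}}+6\sqrt{\eta_j}\,\mathbf{I}$.
\item Since $\sum_j P_{H_{j+1}\oplus H_{j+2}}=2\mathbf{I}$, the total error is at most $2\epsilon\mathbf{I}$. This controls both frame bounds at once.
\end{itemize}
Inside a finite block the paper also avoids dyadic thinning. It expands the weights $c_n^2\norm{x_n}^2$ in binary and samples normalized rank-one operators with repetition. The repetitions are discarded at the end at bounded cost, because each multiplicity is at most $L\norm{c_nx_n}^2\le LB$.
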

This paper is organized as follows.  
In Section \ref{preliminaries}, we introduce necessary notations and definitions and present several known results required for this paper. 
Our main results are then established in Section \ref{main_results}. Finally, we conclude this paper by presenting more applications of our main results in Section \ref{applications}.

\section{Preliminaries}
\label{preliminaries}
Throughout this paper, we denote by $H$ an infinite-dimensional separable Hilbert space equipped with the inner product $\ip{\cdot}{\cdot}.$ For $x\in H$, we write $\norm{x}$ instead of $\norm{x}_H$ for simplicity. We will write $\norm{v}$ to mean the Euclidean norm of $v$ when $v\in \R^d.$ Whenever necessary, the norm of an element in a specific Hilbert space will be indicated explicitly. For any countable set $J$, we use $\#J$ to denote the number of elements in $J.$ The (Lebesgue) measure of a subset $S$ of $\R^d$ is denoted by $|S|.$

We will use the selector form of Weaver's $\text{KS}_2$ conjecture established by Bownik in \cite{Bo24}. Follow the definition and terminology in \cite{Bo24}, we define the notion of \emph{binary selectors} as follows. For any $N\in\N$, we denote by $\set{0,1}^N$ the set of all $N$-tuples where each components are either $0$ or $1$. 
\begin{definition}
     Let $I$ be a countable index set and let $\set{J_k}_{k\in J}$ be any partition of $I$ with $\#J_k=2$ for all $k\in J$. Binary selectors of order $1$ are sets $I_0$ and $I_1$ such that $I = I_0 \cup I_1$ and $$\#(J_k\cap I_0)=\#(J_k\cap I_1)=1  \quad \text{for all } k\in J.$$
For $N\geq 2$, we define binary selectors of order $N$ by induction. Assume that $\set{I_b}_{b\in \set{0,1}^{N-1}}$ are binary selectors of order $N-1$. For each $b\in \set{0,1}^{N-1}$, let $\set{J_{b,k}}_{k\in J}$ be any partition of $I_b$ with $\#J_{b,k}=2$ for all $k\in J$. Then binary selectors of order $N$ are sets $I_{b,0}$ and  $I_{b,1}$ satisfying $I_b=I_{b,0}\cup I_{b,1}$ and $\#(I_{b,0}\cap J_{b,k})=\#(I_{b,1}\cap J_{b,k})=1$ for all $k\in J.$ \qeddef 
\end{definition}
We say that a linear operator $T\colon H\rightarrow H$ is \emph{positive (semi-definite)} if $\ip{Tx}{x}\geq 0$ for all $x\in H.$
A positive trace-class operator $T$ on $H$ is a positive operator on $H$ for which $$\text{tr}(T)\Eq\sumli \ip{Te_n}{e_n}<\infty$$ for some orthonormal basis $\set{e_n}\inN$ for $H.$ We now state the selector form of Weaver's $\text{KS}_2$ conjecture as follows.
\begin{theorem}(\cite[Theorem 5.3]{Bo24})
\label{binary_selector_theorem}
    Let $\delta>0$ and let $\set{T_n}\inN$ be a family of positive trace-class operators defined on $H$. Assume that $$T\coloneq \sumli T_n\Le \textbf{I} \quad \text{ and }\quad\text{tr}(T_n)\leq \delta\text{ for all }n\in\N.   $$
    Then there exists some absolute constant $C>0$ such that for any $N\in\N$ with $2^N<\frac{1}{\delta}$ and any intermediate choices of partitions with sets of size $2$ there exist binary selectors $I_b$, $b\in \set{0,1}^N$, that form a partition of $\N$ satisfying \begin{align}
    \Bignorm{2^N\sum_{n\in I_b}T_n-T}\Le C\sqrt{2^N\delta},  \end{align}
 for all $b\in\set{0,1}^N$. \qeddef
\end{theorem}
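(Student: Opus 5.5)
The plan is to prove the order-$1$ case as a two-sided discrepancy estimate by the method of interlacing polynomials, and then to reach order $N$ by iterating it with rescaled operators. The errors from the successive levels should form a geometric series dominated by its last term, and this produces the $\sqrt{2^N\delta}$ on the right-hand side.

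\emph{Reductions.} First I reduce to finitely many operators on a finite-dimensional space. Fix an orthonormal basis and let $P_d$ be the projection onto the first $d$ basis vectors. I truncate to $\set{P_dT_nP_d}_{n\le K}$; this is legitimate because compressions keep positivity, keep $\sum T_n\le \mathbf I$, and do not increase traces. Suppose the finite result holds with a uniform constant, for the prescribed pairs $J_k$ lying inside $\set{1,\dots,K}$. A diagonal argument then gives a limiting choice: each pair admits only two selections, so the set of selections is compact, and I can pick selections that stabilize on every finite set. The bound $\bignorm{2^N\sum_{n\in I_b}T_n-T}\le C\sqrt{2^N\delta}$ passes to the limit in the weak operator topology, since norm balls are WOT-closed.

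\emph{Padding and the order-$1$ case.} Second, I would pad the family so that $T=\mathbf I$. I split $\mathbf I-T$ into many positive pieces of trace $\le\delta$ by spectral decomposition, group them in pairs of equal pieces, and append these pairs to the partition. In every selector, each appended pair then contributes exactly half of its sum. An order-$1$ estimate for the padded family therefore yields, after subtracting the padding, the same estimate for the original family.

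For order $1$ with $T=\mathbf I$, I would work on $H\oplus H$. For each pair $J_k=\set{a,b}$, let $X_k$ take the values $2T_a\oplus 2T_b$ and $2T_b\oplus 2T_a$, each with probability $\tfrac12$, and let the $X_k$ be independent. Then $\mathbb E\sum_k X_k=\mathbf I\oplus\mathbf I$ and $\mathbb E\,\text{tr}(X_k)\le 4\delta$. I then need the Marcus--Spielman--Srivastava theorem extended to sums of independent random positive matrices of arbitrary rank, in the form due to Cohen and to Br\"and\'en. Real stability of the mixed characteristic polynomial still holds there, and the barrier argument gives some outcome with $\bignorm{\sum_kX_k}\le(1+\sqrt{4\delta})^2$.

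Let $I_0$ collect the indices placed in the first coordinate and $I_1$ those placed in the second. The first block gives $2\sum_{I_0}T_n\le(1+C\sqrt\delta)\mathbf I$. The second block gives $2\sum_{I_1}T_n\le(1+C\sqrt\delta)\mathbf I$, which together with $2\sum_{I_1}T_n=2\mathbf I-2\sum_{I_0}T_n$ yields $2\sum_{I_0}T_n\ge(1-C\sqrt\delta)\mathbf I$. This is the order-$1$ claim.

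\emph{Induction on $N$.} Suppose the selectors $I_b$, $b\in\set{0,1}^{N-1}$, satisfy
\begin{equation*}
\Bignorm{2^{N-1}\sum_{n\in I_b}T_n-T}\le C\sqrt{2^{N-1}\delta}.
\end{equation*}
I apply the order-$1$ case to the family $\set{2^{N-1}T_n}_{n\in I_b}$. Its sum $T_b$ satisfies $T_b\le(1+C\sqrt{2^{N-1}\delta})\mathbf I$, and its traces are at most $2^{N-1}\delta$. After normalizing by that factor, the order-$1$ case gives
\begin{equation*}
\Bignorm{2^N\sum_{n\in I_{b,i}}T_n-T_b}\le C'\sqrt{2^{N-1}\delta},
\end{equation*}
up to a bounded factor. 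By the triangle inequality the total error is $\sum_{j\le N}C'\sqrt{2^{j}\delta}\le C''\sqrt{2^N\delta}$. The hypothesis $2^N\delta<1$ keeps every normalizing factor bounded, so the constants stay absolute.

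\emph{Main obstacle.} I expect the hardest step to be the order-$1$ case for higher-rank random operators with a dimension-free constant. The scalar MSS bound is stated for rank-one vectors, so I need the mixed characteristic polynomial for general positive $X_k$. If the barrier computation turns out to give only a $\sqrt\delta$ bound with a worse constant, that is still sufficient, because the theorem only asserts the existence of some absolute constant $C$.
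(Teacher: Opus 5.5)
The paper gives no proof of this statement; it imports it from \cite[Theorem 5.3]{Bo24}. Your argument is correct and is essentially the route behind that cited result: you obtain the order-$1$ selector form of $\text{KS}_2$ from the higher-rank Marcus--Spielman--Srivastava bound (Cohen, Br\"and\'en), applied to the doubled random operators $2T_a\oplus 2T_b$ or $2T_b\oplus 2T_a$ after reducing to finite dimensions and padding to $T=\textbf{I}$, and then iterate level by level on the rescaled families $\set{2^{j}T_n}_{n\in I_b}$ with the prescribed pairings. The recursion for $B_j$ quoted after the theorem records the same accumulation of level-$j$ errors of size $O(\sqrt{2^j\delta})$ that you sum geometrically to $O(\sqrt{2^N\delta})$ using $2^N\delta<1$.
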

We remark that Theorem \ref{binary_selector_theorem} also holds for finite collection of positive trace-class operators by considering zero operators. 
Moreover, fix $\delta>0$ and define the sequence of scalars $(B_n)_{n=0}^\infty$ by 
$$B_0=1, \quad B_{j+1}=B_j+4\sqrt{2^j\delta B_j}+2^{j+1}\delta,~j\geq1.$$
The absolute constant $C$ in Theorem \ref{binary_selector_theorem} may be chosen so that $\sum_{j=0}^{N-1}(B_j-1)\Le C\sqrt{2^N\delta},$
for every $N\in \N$ such that $2^N\delta<1$ (See \cite[Lemma 5.2]{Bo24} or \cite[Lemma 10.20]{OU16} for a proof). 

We will also need the following elementary result. For a proof, see \cite[Lemma 5.2]{Bo24}. 
\begin{lemma}
\label{elementary_Hilbert_result}
Let $T$ be a positive operator defined on $H$. For any given closed subspace $M$ of $H$, let $P_M$ be the orthogonal projection onto $M$. Then for any closed subspace $M$ of $H$ there exists some constant $C_M= \bigparen{\norm{P_MTP_M}~\norm{P_{M^\perp}TP_{M^\perp}}}^{1/2}$ such that  
$$-C_M\textbf{I}\Le T-P_MTP_M-P_{M^\perp}TP_{M^\perp}\Le C_M\textbf{I} \qeddeff$$
\end{lemma}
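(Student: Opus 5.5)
The plan is to reduce the operator inequality to a scalar Cauchy--Schwarz estimate for the positive sesquilinear form $(u,v)\mapsto\ip{Tu}{v}$. Write $P=P_M$ and $Q=P_{M^\perp}=\textbf{I}-P$. We may take $T$ bounded and self-adjoint. Boundedness holds because an everywhere-defined positive operator is symmetric, so the Hellinger--Toeplitz theorem applies. Self-adjointness holds because positivity on a complex Hilbert space forces symmetry.

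First, expand $T=(P+Q)T(P+Q)$. This gives
$$T-PTP-QTQ \Eq PTQ+QTP,$$
which is a self-adjoint operator. The claimed two-sided inequality is therefore equivalent to
$$\bigabs{\ip{(PTQ+QTP)x}{x}}\Le C_M\norm{x}^2\quad\text{for all }x\in H.$$

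Second, fix $x\in H$ and put $u=Px$, $v=Qx$, so that $\norm{x}^2=\norm{u}^2+\norm{v}^2$. Using self-adjointness of $P$ and $Q$, we get $\ip{PTQx}{x}=\ip{Tv}{u}$ and $\ip{QTPx}{x}=\ip{Tu}{v}=\overline{\ip{Tv}{u}}$. Hence
$$\ip{(PTQ+QTP)x}{x}\Eq 2\,\mathrm{Re}\,\ip{Tv}{u}.$$

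Third, the form $[y,z]:=\ip{Ty}{z}$ is positive semi-definite, so the Cauchy--Schwarz inequality holds for it. This gives
$$|\ip{Tv}{u}|\Le \ip{Tu}{u}^{1/2}\ip{Tv}{v}^{1/2}.$$
Since $u=Pu$, we have $\ip{Tu}{u}=\ip{PTPu}{u}\le\norm{PTP}\,\norm{u}^2$. Similarly $\ip{Tv}{v}\le\norm{QTQ}\,\norm{v}^2$. Therefore $|\ip{Tv}{u}|\le C_M\norm{u}\,\norm{v}$.

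Finally, apply $2\norm{u}\,\norm{v}\le\norm{u}^2+\norm{v}^2=\norm{x}^2$ to obtain
$$\bigabs{\ip{(T-PTP-QTQ)x}{x}}\Le C_M\norm{x}^2,$$
which is the claim.

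There is no real obstacle. The only point needing care is the Cauchy--Schwarz inequality for a merely semi-definite form. Its standard proof goes through unchanged, since $[y+tz,\,y+tz]\ge 0$ for all scalars $t$ and non-definiteness is never used.
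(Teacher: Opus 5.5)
Your proof is correct and complete. The paper gives no argument of its own for this lemma; it only points to [Bo24]. So what you have written is a self-contained proof rather than a variant of one in the text. Each step is sound:
\begin{enumerate}
\item the reduction to the off-diagonal part $PTQ+QTP$;
\item the identity $\langle (PTQ+QTP)x,x\rangle = 2\,\mathrm{Re}\langle Tv,u\rangle$;
\item Cauchy--Schwarz for the semi-definite form $\langle T\cdot,\cdot\rangle$;
\item the bound $2\|u\|\,\|v\|\le\|x\|^2$.
\end{enumerate}
Since $PTQ+QTP$ is self-adjoint, the resulting quadratic-form bound is exactly the two-sided operator inequality.

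Another common way to present this estimate factors $PTQ=(T^{1/2}P)^*(T^{1/2}Q)$, which gives $\|PTQ\|\le\|PTP\|^{1/2}\|QTQ\|^{1/2}=C_M$. It then uses that $(PTQ+QTP)^2=PTQTP+QTPTQ$ is block diagonal, so $\|PTQ+QTP\|=\|PTQ\|$. Your version is the same inequality in quadratic-form language. It avoids the square root of $T$ and the norm computation for off-diagonal operators. The only price is Cauchy--Schwarz for a possibly degenerate form, which is harmless, as you note.

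Your Hellinger--Toeplitz remark is correct but unnecessary here. Every operator to which the paper applies the lemma is already a bounded positive operator.
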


Another key ingredient in our proof is the following theorem due to Tselishchev.
\begin{theorem}(\cite[Theorem 1.2]{AT25})
\label{unsch_scalable_lemma}
Let $\set{x_n}\inN$ be an unconditional Schauder frame for $H$ with coefficient functionals $\set{y_n}\inN$.
Then there exists some sequence of nonzero scalars such that $\set{c_n}\inN$ such that both $\set{c_nx_n}\inN$ and $\set{\bar{c}_n^{-1}y_n}$ are frames for $H.$\qeddef


\end{theorem}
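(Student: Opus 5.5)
The plan is to recast the problem as a factorization statement for the two operators naturally attached to $\set{x_n}\inN$ and $\set{y_n}\inN$. Let $E$ be the space of scalar sequences $(a_n)$ for which $\sum a_nx_n$ converges unconditionally, normed by
$$\norm{(a_n)}_E=\sup_{|\varepsilon_n|\le 1}\Bignorm{\sum_n \varepsilon_na_nx_n}.$$
Then $E$ is a Banach sequence lattice, and the synthesis map $V\colon E\to H$, $V(a_n)=\sum a_nx_n$, is a contraction. By unconditionality and the uniform boundedness principle, the analysis map $U\colon H\to E$, $Ux=(\ip{x}{y_n})$, is bounded. We have $VU=\textbf{I}$.

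The required scalars $c_n$ are exactly a diagonal weight $w_n=|c_n|^2$ satisfying two bounds:
$$\Bignorm{\sum a_nx_n}^2\le C\sum_n |a_n|^2/w_n \qquad\text{and}\qquad \sum_n w_n|\ip{x}{y_n}|^{2}\cdot|c_n|^{-2}\cdots$$
Written correctly, the conditions are
$$\sum_n |c_n|^2|\ip{x}{x_n}|^2\le C\norm{x}^2 \qquad\text{and}\qquad \sum_n |c_n|^{-2}|\ip{x}{y_n}|^2\le C\norm{x}^2.$$
The lower frame bounds then come for free from $x=\sum\ip{x}{y_n}x_n$ and Cauchy--Schwarz. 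For instance,
$$\norm{x}^2=\sum\ip{x}{\bar c_n^{-1}y_n}\ip{c_nx_n}{x}\le \Bigparen{\sum|\ip{x}{\bar c_n^{-1} y_n}|^2}^{1/2}\sqrt{C}\norm{x},$$
and the same computation with the roles of the two sequences exchanged gives the other lower bound. So only the two Bessel (upper) bounds with a common weight need to be produced.

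Next I would use Krivine's lattice version of Grothendieck's inequality. For every bounded operator $T$ between a Banach lattice and a Hilbert space (in either direction),
$$\Bignorm{\Bigparen{\sum_k|Ta^k|^2}^{1/2}}\le K_G\norm{T}\,\Bignorm{\Bigparen{\sum_k|a^k|^2}^{1/2}}.$$
Applied to $V$ and to $U$, this gives:
\begin{itemize}
\item for all finite families $a^1,\dots,a^K\in E$, the bound $\sum_k\norm{Va^k}^2\le K_G^2\bignorm{(\sum_k|a^k|^2)^{1/2}}_E^2$;
\item for all finite families $h_1,\dots,h_K\in H$, the bound $\bignorm{(\sum_k|Uh_k|^2)^{1/2}}_E^2\le K_G^2\norm{U}^2\sum_k\norm{h_k}^2$.
\end{itemize}
The first inequality says $V$ is $2$-concave and the second says $U$ is $2$-convex into $E$.

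The remaining and main step is to extract a single weight $(w_n)$ from these two inequalities simultaneously. I would first truncate to finitely many indices $n\le N$, so that everything is finite-dimensional, with constants independent of $N$. I would then apply a Ky Fan / Pietsch-type minimax argument over the compact convex set of positive weights normalized in a suitable way. The aim is to obtain a weight $w^{(N)}$ satisfying two bounds with constants depending only on $K_G$, $\norm{U}$ and $\norm{V}$:
$$\norm{Va}^2\le C\sum_n w^{(N)}_n|a_n|^2 \qquad\text{and}\qquad \sum_n |(Uh)_n|^2/w^{(N)}_n\le C\norm{h}^2.$$
Concretely, $2$-concavity of $V$ yields a positive functional on the $1$-concavification $E_{(1/2)}$ dominating $\norm{Va}^2$, while $2$-convexity of $U$ bounds the dual quantity. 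Pairing these through the lattice duality $E_{(1/2)}\times E_{(1/2)}^{*}$ forces both estimates with the same diagonal density. This is the step I expect to be the real obstacle: the two conditions pull the weight in opposite directions, and the minimax must be arranged so that $VU=\textbf{I}$ ties them together.

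Finally, I would set $c_n=(w_n)^{-1/2}$ up to the convention for which side carries which weight. I would pass $N\to\infty$ by a diagonal compactness argument on the weights. This is possible after normalizing each weight at a fixed index, which is legitimate because the constraints are scale-invariant in the pair. It yields the two uniform Bessel bounds, and the lower frame bounds then follow as explained in the first step.
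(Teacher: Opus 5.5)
The paper gives no proof of this statement (it is quoted from Tselishchev's work), so your proposal has to be judged as a standalone argument. Your reduction is sound. It suffices to find one weight $t_n=|c_n|^2$ with $\sum_n t_n|\ip{x}{x_n}|^2\le C\norm{x}^2$ and $\sum_n t_n^{-1}|\ip{x}{y_n}|^2\le C\norm{x}^2$; the lower bounds then follow from $x=\sum\ip{x}{y_n}x_n$ and Cauchy--Schwarz. Your boundedness of $U$ and the two Krivine inequalities are also correct.

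The gap is the step that carries the whole theorem: producing that single weight. You only announce it, and the target you display for it is the wrong one. The pair $\norm{Va}^2\le C\sum_n w_n|a_n|^2$, $\sum_n|(Uh)_n|^2/w_n\le C\norm{h}^2$ puts reciprocal weights on the two sides. It holds with $C=1$ for $w_n=2^n\max(1,\norm{x_n}^2,\norm{y_n}^2)$, so obtaining it proves nothing. What is needed is that $V$ be bounded on $\ell_2(\omega)$ and $U$ bounded into $\ell_2(\omega)$ for the \emph{same} $\omega$. Also, $VU=\textbf{I}$ plays no role in finding the weight; only the unconditional constant $\norm{U}\norm{V}$ does, and $VU=\textbf{I}$ is used only for the lower bounds.

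The missing idea is the right minimax. Fix $N$, put $H_N=\text{span}\set{x_n,y_n}_{n\le N}$, and apply Sion's theorem to
\[
f(t;\rho,\tau)=\sum_{n\le N}\Bigparen{t_n\ip{\rho x_n}{x_n}+t_n^{-1}\ip{\tau y_n}{y_n}},\qquad t\in(0,\infty)^N .
\]
Here $(\rho,\tau)$ ranges over pairs of density operators on $H_N$. This is a compact convex set on which $f$ is affine, and $f$ is convex in $t$. Since $\inf_t f=2\sum_n\ip{\rho x_n}{x_n}^{1/2}\ip{\tau y_n}{y_n}^{1/2}$, everything reduces to bounding this sum, and this is exactly where your two inequalities pair.

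Write $\rho=\sum_ip_i\ip{\cdot}{z_i}z_i$ and $\tau=\sum_jq_j\ip{\cdot}{h_j}h_j$. Let $V^t\colon H\to E^*$ be the map $V^tz=(\ip{z}{x_n})_n$, which has norm $\norm{V}$. Then the sum is the $E^*$--$E$ pairing of $\bigparen{\sum_i|V^t(\sqrt{p_i}z_i)|^2}^{1/2}$ and $\bigparen{\sum_j|U(\sqrt{q_j}h_j)|^2}^{1/2}$. Krivine applied to $V^t$ and to $U$ bounds it by $K_G^2\norm{U}\norm{V}$. Hence there is $t^{(N)}$ for which both Bessel constants are at most $C=2K_G^2\norm{U}\norm{V}+1$, uniformly in $N$.

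Testing the two bounds at $x=x_n$ and $x=y_n$ gives $\norm{y_n}^2/C\le t^{(N)}_n\le C/\norm{x_n}^2$. So a diagonal limit exists without any normalization, and the limit weight satisfies both bounds. Indices with $x_n=0$ or $y_n=0$ can be discarded and rescaled by hand.
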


\section{Main Results}
\label{main_results}
In this section, we prove our main result. To this end, we require the following two lemmas. The first one is a refinement of \cite[Lemma 7.3]{Bo24}, where we provide an universal bound on the number of duplicates that the sampling function could take.     
\begin{lemma}
\label{sampling_lemma}
Let $I$ be a finite set.
Let $\set{T_n}_{n\in I}$ be a family of positive trace-class operator on $H$ with $\text{tr}(T_n)\leq \delta$ for some $\delta>0$ for all $n\in I$ and let $(c_n)_{n\in I}$ be a set of positive numbers such that 
$$T\coloneq\sum_{n\in I}c_nT_n\leq \frac{1}{2}\textbf{I}.$$
Suppose that $M\subseteq H$ is a closed subspace such that $\gamma\coloneq \text{tr}(P_{M}TP_M)\leq 1$. Then for any $0<\epsilon<1$ there exists a finite set $I'$ and a sampling function $\sigma\colon I'\rightarrow I$ such that $$\frac{\epsilon}{2} P_{M^\perp}-6\sqrt{\gamma}\textbf{I}\leq \frac{1}{2^\beta}\sum_{n\in I'}T_{\sigma(n)}-T\leq \frac{\epsilon}{2} P_{M^\perp}+6\sqrt{\gamma}\textbf{I}, $$
where $\beta\in \N$ is such that 
$1<2^{\beta}\frac{\epsilon^2}{4C^2\delta}\leq 2$. Moreover, we have $$\#\set{k\in I'\,|\,\sigma(k)=n}\leq 2^{\beta+1}c_n \quad\text{for all $n\in I$}.$$ 
\end{lemma}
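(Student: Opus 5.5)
Following \cite[Lemma 7.3]{Bo24}, I would reduce to the case where the weights $c_n$ are dyadic-scale integers and then apply Theorem \ref{binary_selector_theorem} once, with $N=\beta$, to a list in which each $T_n$ is repeated.

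\textbf{Step 1: the duplicated list and the choice of $\beta$.} Fix $\beta$ as in the statement. For each $n\in I$ put $k_n=\lfloor 2^\beta c_n\rfloor$, and let $I'$ be a disjoint union of $k_n$ copies of $n$. The sampling function $\sigma\colon I'\to I$ sends each copy to its original. Then $\#\sigma^{-1}(n)=k_n\le 2^\beta c_n$, which already gives the multiplicity bound with room to spare; the extra factor $2$ in the statement leaves space to round up instead, if that turns out to be needed. The rescaled sum
\[
\frac{1}{2^\beta}\sum_{k\in I'}T_{\sigma(k)}
\]
differs from $T$ only through the rounding error $\sum_n (c_n-2^{-\beta}k_n)T_n$. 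This error is positive and has coefficients in $[0,2^{-\beta})$.

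\textbf{Step 2: split the rounding error along $M$ and $M^\perp$.} I would split the error with Lemma \ref{elementary_Hilbert_result}.
\begin{itemize}
\item On $M$, the compression $P_M(\cdot)P_M$ of the error is at most $P_MTP_M$. Its trace is therefore at most $\gamma$, so its norm is at most $\gamma\le\sqrt\gamma$.
\item On $M^\perp$, the error is not small. The natural way to handle it is not to round at all, but to run the binary selector theorem.
\end{itemize}
So I would refine the plan. Take integers $m_n=\lceil 2^\beta c_n\rceil$, which still satisfy $m_n\le 2^{\beta+1}c_n$ once $2^\beta c_n\ge 1$. Consider the operators $S_{n,j}=T_n/2^{\beta}$ scaled suitably, so that their total sum is bounded by $\textbf{I}$ (this uses $T\le\frac12\textbf{I}$) and each has trace at most $\delta'$. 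Then apply Theorem \ref{binary_selector_theorem} with $2^\beta$ selectors, with $\delta'$ chosen so that $C\sqrt{2^\beta\delta'}\approx\epsilon/2$; this is exactly how $\beta$ is defined. Choosing one selector $I_b$ and setting $I'=I_b$ gives
\[
\Bignorm{2^\beta\sum_{I_b}S-\sum S}\le \frac{\epsilon}{2}.
\]

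\textbf{Step 3: localize the error to $M^\perp$.} To obtain $\frac{\epsilon}{2}P_{M^\perp}$ rather than $\frac{\epsilon}{2}\textbf{I}$, I would apply the selector theorem only to the compressions $P_{M^\perp}T_nP_{M^\perp}$. I would then control the remaining pieces with Lemma \ref{elementary_Hilbert_result}.
\begin{itemize}
\item The $M$-part of both sums has norm at most $\gamma$, or $2\gamma$ after selection. Its trace is preserved on average and bounded by a constant times $\gamma$.
\item Each cross term is bounded by $C_M$. This is the square root of the product of the $M$-norm, which is $O(\gamma)$, and the $M^\perp$-norm, which is $O(1)$. Hence the cross terms are $O(\sqrt\gamma)$.
\end{itemize}
Summing these contributions gives the constant $6\sqrt\gamma$.

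\textbf{Main obstacle.} The delicate step is ensuring that the selected set has trace on $M$ still bounded by a multiple of $\gamma$. A single selector controls only the norm, not the trace. I would handle this by averaging: the $2^\beta$ selectors partition the index set, so at least one of them has $M$-trace at most $\gamma$ after the $2^\beta$ rescaling. A single choice of $b$ serving both purposes works, because the norm bound holds for all $b$ simultaneously. The remaining issue is the multiplicity bound $\#\sigma^{-1}(n)\le 2^{\beta+1}c_n$. It follows because $I_b$ takes at most all $m_n\le 2^{\beta+1}c_n$ copies of $n$. Terms with $2^\beta c_n<1$ are handled by dropping them from $I'$ and absorbing them into the error. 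Their total is at most $2^{-\beta}\sum T_n$, and this is controlled by the trace bound $\delta$ and the choice of $\beta$, which makes $2^{-\beta}\lesssim\delta/\epsilon^2$.
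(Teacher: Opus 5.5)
\textbf{The approach in Step 3 is fine, but Step 2 has a genuine gap, and the small-$c_n$ step fails because the stated multiplicity bound is false.}

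Your Step 3 matches the paper and is fine: compress to $M^\perp$, pick among the selectors one whose $M$-trace is at most the average, and bound the cross terms by Lemma \ref{elementary_Hilbert_result}. Applying the selector theorem to $P_{M^\perp}T_nP_{M^\perp}$ instead of $T_n$ is a harmless variant.

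The gap is in Step 2, where weights become multiplicities. You round at the output scale: $\lceil 2^\beta c_n\rceil$ copies of $T_n/2^\beta$, with $N=\beta$ levels. But Theorem \ref{binary_selector_theorem} only approximates the sum of the list it is given, namely $\sum_n\lceil 2^\beta c_n\rceil 2^{-\beta}T_n$. This differs from $T$ by exactly the rounding error you had already called uncontrolled. If every $2^\beta c_n$ lies slightly above $1$, rounding up nearly doubles $T$. For indices with $2^\beta c_n<1$ the error is of order $2^{-\beta}\norm{\sum_n T_n}$, which is unbounded in $\#I$. The normalization also fails: with pieces $2^{-\beta}T_n$ and $N=\beta$ you get $2^\beta\sum_{I_b}S=\sum_{(n,j)\in I_b}T_n$, with roughly $c_n$ rather than $2^\beta c_n$ copies of $n$. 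Rescaling the pieces only moves the rounding to another fixed scale.

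The missing idea, which the paper takes from \cite[Lemma 7.3]{Bo24}, is to decouple the scale of the pieces from the output scale:
\begin{itemize}
\item Expand each $c_n$ dyadically and truncate at a level $2^{-\eta}$ chosen from the data (possible because $I$ is finite), so the truncation error is at most $\min(\epsilon/2,\gamma)\mathbf{I}$.
\item Split into pieces $2^{-\eta}T_n$ of trace at most $2^{-\eta}\delta$, pad so the pairings exist, and run $N=\eta-\beta$ levels of selection.
\end{itemize}
Then $2^N2^{-\eta}=2^{-\beta}$ gives the normalization in the statement. The selector error is $C\sqrt{2^N2^{-\eta}\delta}=C\sqrt{2^{-\beta}\delta}<\epsilon/2$, which is where the definition of $\beta$ comes from. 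The passage from scale $2^{-\eta}$ to $2^{-\beta}$ is done by the selector theorem with norm control, never by rounding. Pairing copies of the same $n$ at every level shows each selector contains at most $\lceil 2^\beta c_n\rceil$ copies of $n$.

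Your treatment of indices with $2^\beta c_n<1$ fails and cannot be repaired. The scaling you quote is inverted: the definition of $\beta$ gives $2^{-\beta}\asymp\epsilon^2/(C^2\delta)$. More importantly, $\norm{\sum_{\mathrm{dropped}}c_nT_n}$ is not controlled by the trace bound $\delta$ at all, since arbitrarily many indices may be dropped.

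In fact the multiplicity bound $2^{\beta+1}c_n$ is false as stated. Fix $\epsilon$ and $\delta$ with $8C^2\delta/\epsilon^2\ge 2$, a unit vector $e$, and $M=\set{0}$. For $n=1,\dots,K$ let $T_n=\delta\ip{\cdot}{e}e$ and $c_n=1/(2K\delta)$. Then $T=\frac12\ip{\cdot}{e}e$ and $\gamma=0$. Also $2^{\beta+1}c_n\le 8C^2/(K\epsilon^2)<1$ once $K>8C^2/\epsilon^2$, which forces $I'=\emptyset$. But then $\ip{(2^{-\beta}\sum_{k\in I'}T_{\sigma(k)}-T)e}{e}=-\frac12<-\frac{\epsilon}{2}$. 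This violates the lower bound, read with $-\frac{\epsilon}{2}P_{M^\perp}$ as intended.

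The paper's inequality (\ref{cardinality_control}) has the same defect. Its pairing argument only yields $\lceil 2^\beta c_k\rceil$, which is at most $2^{\beta+1}c_k$ only when $2^\beta c_k\ge\frac12$. What is provable is $\#\set{k\in I'\,|\,\sigma(k)=n}\le\lceil 2^\beta c_n\rceil$. That suffices for Theorem \ref{characterization_uncondition_Scf}, which only needs the multiplicities to be uniformly bounded.
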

\begin{proof}
    For each $n\in I$ we let $(\ell_{n,j})_{j\in\N}\subseteq \N$ be a sequence such that $c_n=\sum_{j=1}^\infty 2^{-\ell_{n,j}}.$ Then for each $0<\epsilon<\min(1,C\sqrt{8\delta})$ we choose $L_n$ so large that $$T-\sum_{n\in I}\sum_{j=1}^{L_n} 2^{-\ell_{n,j}}T_{n}\leq \min(\epsilon/{2},\gamma)\textbf{I}$$
Let  $\set{m_{n,j}\,|\,n\in I,1\leq j\leq M_n}$ be a set of scalars that $$\sum_{j=1}^{L_n}2^{-\ell_{n,j}}+\sum_{j=1}^{M_n}2^{-m_{n,j}}=\Big\lceil \sum_{j=1}^{L_n}2^{-\ell_{n,j}}\Big\rceil \quad \text{for all }n\in I,$$
where $\lceil\cdot\rceil$ is the ceiling function. Let $\beta\in \N$ be such that $1<2^{\beta}\frac{\epsilon^2}{4C^2\delta}\leq 2$ where $C>0$ is the absolute constant described in Theorem \ref{binary_selector_theorem}. We then define $\eta=\max\limits_{n,j}(\ell_{n,j},m_{n,j},\beta)$ and a family of positive trace-class operators $\set{\phi_n}_{n\in I}$ 
that satisfies\begin{enumerate} \setlength\itemsep{0.5em}
    \item [\textup{(i)}] $\clspan\set{\phi_n}\subseteq\clspan\set{T_n}$,
\item [\textup{(ii)}] $\sum_{n\in I_0}\sum_{j=1}^{M_n}2^{-m_{n,j}}\phi_n\leq \frac{1}{2}\textbf{I},$
\item [\textup{(iii)}] $\text{tr}(\phi_n)\Le 2^{-\beta+2}\max\set{\epsilon,\gamma},$
\end{enumerate}
for all $n\in I$.
For each $n\in I, 1\leq j\leq L_n$ we decompose $2^{-\ell_{n,j}}T_n$ and $2^{-m_{n,j}}\phi_n$ into $$2^{-\ell_{n,j}}T_n= \underbrace{2^{-\eta}T_n +\cdots+2^{-\eta}T_n}_{2^{\eta-\ell_{n,j}} \text{ terms}}=\sum_{i=1}^{2^{\eta-\ell_{n,j}}}T_{n,i},$$
and 
$$2^{-m_{n,j}}\phi_n= \underbrace{2^{-\eta}\phi_n +\cdots+2^{-\eta}\phi_n}_{2^{\eta-m_{n,j}} \text{ terms}}=\sum_{i=1}^{2^{\eta-m_{n,j}}}\phi_{n,i},$$
where $T_{n,i}$ and $\phi_{n,i}$ denote $2^{-\eta}T_n$ and $2^{-\eta}T_n$, respectively. Note that $\text{tr}(T_{i,n})\leq2^{-\eta}\delta$ for all $n,i.$ Let $$I_1=\set{(n,i)\,|\,1\leq i\leq 2^{\eta-\ell_{n,j}},1\leq j\leq L_n,n\in I}$$ and 
$$I_2=\set{(n,i)\,|\,1\leq i\leq 2^{\eta-m_{n,j}},1\leq j\leq M_n,n\in I}$$ 
Let $N=\eta-\beta$. We construct binary selectors of order $N$ as follows. Note that $2^{-N}\bigparen{\#I_1+\#I_2}\in \N$. Consequently, it is guaranteed that we execute such a binary selecting process at least $N$ time. At first, we partition $I_1$ into sets of size $2$ by pairing $(n,i)$ that belong to $I_1$ and share the same first index $n$. For those $(n,i)\in I_1$ that is left unmatched, we randomly pair it with $(n,j)\in I_2.$ Next, we randomly pair the rest of $(n,j)\in I_2.$ Let $B_b$ be an arbitrary binary selector of order $1$. We partition $B_b$ with the same logic and then extract the binary selectors of order $2$. We then repeat this procedure $N$ times and obtain binary selectors of order $N.$ Note that for each $k\in I$ we have $$\#\set{(n,i)\in I_1\,|\,n=k\,}\Le 2^{\eta}\sum_{j=1}^{L_k}2^{-\ell_j}\leq 2^\eta c_k.$$
It follow that \begin{equation}
\label{cardinality_control}
    \#\bigparen{B_b\cap \set{(n,i)\in I_1\,|\,n=k\,}}\Le 2^{1+\eta-N} c_k=2^{\beta+1} c_k
\end{equation} for all binary selectors of order $N$.

By Theorem \ref{binary_selector_theorem}, there exist some subsets $J_1\subseteq I_1$ and $J_2\subseteq I_2$ such that 
\begin{equation}
\label{binary_selector_estimate}
\Bignorm{2^N\Bigparen{\sum_{(n,i)\in J_1}T_{n,i}+\sum_{(n,i)\in J_2}\phi_{n,i}}-\sum_{n\in I}\sum_{j=1}^{L_n} (2^{-\ell_{n,j}}T_{n}+2^{-m_{n,j}}\phi_{n})}\Le  C\sqrt{2^{-\beta}\delta}<\frac{\epsilon}{2}.
\end{equation}
Moreover, note that the collection $\{B_b\,|\, b\in \{0,1\}^N\}$ forms a partition of $J_1\cup J_2$. Consequently, we have
\begin{align}
\begin{split}
\sum_{b\in \{0,1\}^N} \sum_{(n,i)\in B_{b}}\bigparen{T_{n,i}+\phi_{n,i}}
&\Eq 2^{-\eta}\sum_{b\in \{0,1\}^N} \sum_{(n,i)\in B_{b}}\bigparen{T_{n}+\phi_{n}}\\ &\Eq \sum_{n\in I}\Bigparen{\sum_{j=1}^{L_n}2^{-\ell_{n,j}}T_n+\sum_{j=1}^{M_n}2^{-m_{n,j}}\phi_n}
\end{split}
\end{align}
Using the additivity of trace and $\#\bigparen{\set{0,1}^N}=2^N$, we see that there exists at least one binary selector $B_b$ such that $$\text{tr}\Bigparen{2^N\sum_{(n,i)\in B_{b}}\bigparen{T_{n,i}+\phi_{n,i}}}\Le \text{tr}\Bigparen{\sum_{n\in I}\Bigparen{\sum_{j=1}^{L_n}2^{-\ell_{n,j}}T_n+\sum_{j=1}^{M_n}2^{-m_{n,j}}\phi_n}}=2\gamma.$$ 
Thus, we can even pick $J_1,J_2$ in Equation (\ref{binary_selector_estimate}) to be the subsets such that
 \begin{align}
\begin{split}
  \label{trace_dominate_operator_norm_ineq}  
\text{tr}\Bigparen{P_{M}\Bigparen{2^N\bigparen{\sum_{(n,i)\in J_1}T_{n,i}+\sum_{(n,i)\in J_2}\phi_{n,i}}}P_{M}}\leq 2\gamma.
\end{split}
\end{align}
 For notational convenience, we define $$\Phi\coloneq 2^N\bigparen{\sum_{(n,i)\in J_1}T_{n,i}+\sum_{(n,i)\in J_2}\phi_{n,i}} \quad \text{and}\quad \Psi\coloneq \sum_{n\in I}\sum_{j=1}^{L_n} (2^{-\ell_{n,j}}T_{n}+2^{-m_{n,j}}\phi_{n})$$
Using the fact that 
\begin{align}
\begin{split}
\bignorm{P_{M^\perp}\Phi P_{M^\perp}}\Le \bignorm{\Phi}\leq \norm{T}+\epsilon<2,
\end{split}
\end{align}
it follows that, by Lemma \ref{elementary_Hilbert_result}, 
\begin{equation}
\label{estimate_1}
-\sqrt{2\gamma}\textbf{I}\Le \Phi-P_{M}\Phi P_{M}-P_{M^\perp}\Phi P_{M^\perp}\Le \sqrt{2\gamma}\textbf{I}.
\end{equation}
On the other hand, by assumption, we also have 
\begin{equation}
\label{estimate_2}
-\sqrt{\gamma}\textbf{I}\Le \Psi-P_{M}\Psi P_{M}-P_{M^\perp}\Psi P_{M^\perp}\Le \sqrt{\gamma}\textbf{I}.
\end{equation}
Note that by Equation (\ref{binary_selector_estimate}) and Equation (\ref{trace_dominate_operator_norm_ineq}) we have
\begin{equation}
\label{estimate_3}
-\frac{\epsilon}{2} P_{M^\perp} \Le  P_{M^\perp}\bigparen{\Phi-\Theta}P_{M^\perp} \Le  \frac{\epsilon}{2} P_{M^\perp},
\end{equation}
and 
\begin{equation}
\label{estimate_4}
- 2\gamma P_{M} \Le  P_{M}\bigparen{\Phi-\Psi}P_{M} \Le 2\gamma P_{M}.
\end{equation}
Combining Equation (\ref{estimate_1})--(\ref{estimate_4}), it follows that  
$$-(1+\sqrt{2})\sqrt{\gamma}\textbf{I}-\frac{\epsilon}{2}P_{M^\perp}-2\gamma P_M\Le \Phi-\Psi\Le (1+\sqrt{2})\sqrt{\gamma}\textbf{I}+\frac{\epsilon}{2}P_{M^\perp}+2\gamma P_M$$
Since $\gamma<1$, we obtain

$$-(3+\sqrt{2})\sqrt{\gamma}\textbf{I}-\frac{\epsilon}{2}P_{M^\perp}\Le \Phi-\Psi\Le (3+\sqrt{2})\sqrt{\gamma}\textbf{I}+\frac{\epsilon}{2}P_{M^\perp}.$$
By the construction of $\phi_n$ and the fact that $T_{n,i}=2^{-\eta}T_n$, it follows that 
$$-\frac{\epsilon}{2} P_{M^\perp}-6\sqrt{\gamma}\textbf{I}\leq \frac{1}{2^\beta}\sum_{(n,i)\in J_1}T_{n}-T\leq \frac{\epsilon}{2} P_{M^\perp}+6\sqrt{\gamma}\textbf{I}.$$
Let $I'=\set{(n,i)\,|\,(n,i)\in J_1}$ and define $\sigma\colon I'\rightarrow I$ by $\sigma(n,i)=n.$
It is then equivalent that
$$-\frac{\epsilon}{2} P_{M^\perp}-6\sqrt{\gamma}\textbf{I}\leq \frac{1}{2^\beta}\sum_{n\in I'}T_{\sigma(n)}-T\leq \frac{\epsilon}{2} P_{M^\perp}+6\sqrt{\gamma}\textbf{I}.$$
In particular, we have $\#\set{n\in I'\,|\,\sigma(n)=k}\leq 2^{\beta+1}c_k$ for all $k\in I$ by Equation (\ref{cardinality_control}).


\end{proof}
We remark the orthogonal decomposition method utilized in the middle of following proof has been used in \cite{FS19},\cite{Bo24} and \cite{BY26}.
\begin{lemma}\label{upper_bdd_cardinality_control}
    Let $\set{x_n}\inN\subseteq H$ be a sequence and let $(c_n)\inN$ be a sequence of scalars such that $\set{c_nx_n}\inN$ is a frame for $H$ with frame bounds $A$ and $B$. Then there exists a sampling function $\sigma\colon \N\rightarrow\N$ such that $\set{x_{\sigma(n)}}\inN$ can be normalized to form a frame for $H.$
    
    In particular, we have $$\#\set{k\in\N \,|\,\sigma(k)=n}\leq \max(144C^2BA^{-2}, 64C^4B^{-2})|c_n|^2\norm{x_n}^2$$
for all $n\in\N$, where $C$ is the absolute constant described in Theorem \ref{binary_selector_theorem}. 
\end{lemma}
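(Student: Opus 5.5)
Write $u_n = x_n/\norm{x_n}$ and $a_n = |c_n|^2\norm{x_n}^2$. Then $c_nx_n\otimes c_nx_n = a_n\, u_n\otimes u_n$, and the frame hypothesis reads
$$A\textbf{I}\Le S\coloneq\sumli a_n\, u_n\otimes u_n\Le B\textbf{I}.$$
We want a multiset of the $u_n$, with $n$ repeated at most a constant times $a_n$, whose frame operator lies between two positive multiples of $\textbf{I}$. Normalize by setting $T_n = \frac{1}{2B}u_n\otimes u_n$. These are positive rank-one operators with $\text{tr}(T_n) = \delta\coloneq\frac{1}{2B}$, and $\sum a_nT_n = \frac{1}{2B}S$ lies between $\frac{A}{2B}\textbf{I}$ and $\frac12\textbf{I}$. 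This is the setting of Lemma \ref{sampling_lemma}, except that the index set is infinite. So the plan is to cut $\N$ into finite blocks and apply Lemma \ref{sampling_lemma} on each block, using the projection $P_M$ to control cross terms, in the orthogonal decomposition style of \cite{FS19,Bo24}.

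\textbf{Step 1: block decomposition.} Choose an increasing sequence of finite-dimensional subspaces $M_1\subseteq M_2\subseteq\cdots$ with dense union, and finite index blocks $F_1, F_2,\dots$ partitioning $\N$. Choose them inductively so that two tail conditions hold: $\text{tr}(P_{M_k}\sum_{n\notin F_1\cup\dots\cup F_k}a_nT_nP_{M_k})$ is tiny, and $\norm{P_{M_{k-1}}\sum_{n\in F_k}a_nT_n}$ is tiny. This is possible because $\sum a_nT_n$ converges strongly and each $M_k$ is finite-dimensional, so $\text{tr}(P_M(\cdot)P_M)$ of a tail tends to zero. On each block $I=F_k$, apply Lemma \ref{sampling_lemma} with a subspace $M$ chosen so that $\gamma=\text{tr}(P_M T^{(k)} P_M)$ is at most some $\gamma_k$ with $\sum_k\sqrt{\gamma_k}$ small. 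The natural choice is $M = M_{k-1}$, together with a far-out piece orthogonal to the block's essential range, on which the block contributes little trace.

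The lemma then gives a finite multiset $\sigma_k\colon I'_k\to F_k$ with
$$\Bignorm{2^{-\beta}\sum_{i\in I'_k}T_{\sigma_k(i)}-T^{(k)}}\Le \tfrac{\epsilon}{2}+6\sqrt{\gamma_k},$$
and with multiplicity at most $2^{\beta+1}a_n$. Here $\beta$ is fixed by $\epsilon$ and $\delta$ as in the lemma, so it is the same for every block.

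\textbf{Step 2: sum the blocks.} The error for block $k$ splits into two parts. The $6\sqrt{\gamma_k}\textbf{I}$ parts are summable, with total below $\frac{A}{8B}$. The $\frac{\epsilon}{2}P_{M^\perp}$ parts are the main obstacle: summed naively over infinitely many blocks they give an unbounded operator, which is why the lemma isolates this error on $M^\perp$. I would try to arrange the blocks as almost orthogonal pieces. Each block's $\frac{\epsilon}{2}$ error should live essentially on a subspace $W_k$, say $M_k\ominus M_{k-1}$ plus a small-trace remainder, and the $W_k$ should be mutually orthogonal up to summable errors, handled via Lemma \ref{elementary_Hilbert_result}. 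Then the total error is bounded by $\frac{\epsilon}{2}\textbf{I}$ plus a summable term. Choosing $\epsilon = \frac{A}{4B}$ makes $2^{-\beta}\sum_k\sum_{i}T_{\sigma_k(i)}$ lie between $\frac{A}{4B}\textbf{I}$ and $\textbf{I}$. Hence $\set{u_{\sigma(n)}}$ is a frame, where $\sigma$ concatenates the $\sigma_k$.

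\textbf{Step 3: multiplicity bound.} The count is $\#\sigma^{-1}(n)\le 2^{\beta+1}a_n$. From $2^\beta\le \frac{8C^2\delta}{\epsilon^2}$ we get $2^{\beta+1}\le \frac{16C^2\cdot\frac{1}{2B}}{A^2/(16B^2)} = 128C^2BA^{-2}$. Adjusting $\epsilon$, with constant factors from the $6\sqrt\gamma$ slack, gives the stated $144C^2BA^{-2}$. The second term in the max, $64C^4B^{-2}$, covers the case where the lemma's requirement $\epsilon<C\sqrt{8\delta}$ forces a smaller $\epsilon$, i.e.\ when $A/B$ is not small relative to $C/\sqrt B$. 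I would check which constraint binds and take the max.
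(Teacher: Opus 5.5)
Your architecture matches the paper's: finite blocks, Lemma~\ref{sampling_lemma} on each block with $M=M_{k-1}\oplus(\text{far-out piece})$ so that $\gamma_k$ is small, a $\beta$ that does not depend on the block, and the multiplicity bound inherited from the lemma. The gap is in Step~2, which is the crux and which you leave as ``I would try''. The mechanism you propose there cannot be achieved in general: error spaces $W_k\approx M_k\ominus M_{k-1}$ that are mutually orthogonal up to summable errors, with total error $\frac{\epsilon}{2}\textbf{I}$ plus a summable term.

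The lemma forces $W_k=M^\perp$ to contain the range of $T^{(k)}$ up to trace $\gamma_k$. So nearly orthogonal $W_k$'s would require consecutive blocks to have nearly orthogonal ranges, and they need not. Take $x_{2m-1}=e_m$, $x_{2m}=(e_m+e_{m+1})/\sqrt2$, $c_n=1$, which is a frame with $A=1$, $B=3$. Wherever you cut $\N$, the last vector $u$ of one block and the first vector $v$ of the next satisfy $|\ip{u}{v}|=1/\sqrt2$. Since $\norm{P_{W_k^\perp}u}^2\le 2B\gamma_k$ and $\norm{P_{W_{k+1}^\perp}v}^2\le 2B\gamma_{k+1}$, you get $\norm{P_{W_k}P_{W_{k+1}}}\ge 1/\sqrt2-\sqrt{2B\gamma_k}-\sqrt{2B\gamma_{k+1}}$, which does not tend to $0$. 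The underlying obstruction is a circularity. For $F_{k+1}$ to have small trace on $M_k$, you must fix $M_k$ before deciding where $F_k$ ends. Hence $M_k$ cannot be required to contain the range of $F_k$.

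The missing idea is to accept overlap of multiplicity two. Choose $M_{k+1}\supseteq M_k$ only after $F_k$ is fixed, so that it contains the ranges of all $T_n$ with $n\in F_1\cup\cdots\cup F_k$. Then fix the end of $F_{k+1}$ so that all later indices contribute trace at most $\gamma_{k+2}$ on $M_{k+1}$. With this ordering, $T^{(k)}$ has trace at most $\gamma_k$ on $M_{k-1}$ and vanishes on $M_{k+1}^\perp$. So the lemma applies with $M=M_{k-1}\oplus M_{k+1}^\perp$, i.e.\ $W_k=M_{k+1}\ominus M_{k-1}$, and these satisfy $\sum_kP_{W_k}\le 2\textbf{I}$.

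This is exactly the paper's construction: $M_j^\perp=H_{j+1}\oplus H_{j+2}$ with $\sum_jP_{M_j^\perp}=2\textbf{I}$. There $H_{j+2}$ is built from the ranges of $T_n$, $n\le K_{j+1}$, and $K_{j+2}$ is chosen afterwards.

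The accumulated error then becomes $\epsilon\textbf{I}+\sum_k 6\sqrt{\gamma_k}\,\textbf{I}$ rather than $\frac{\epsilon}{2}\textbf{I}+\cdots$. With your $\epsilon=\frac{A}{4B}$, the lower bound $\frac{A}{2B}-\frac{A}{4B}-\frac{A}{8B}$ is still positive. So your count $2^{\beta+1}a_n\le 128C^2BA^{-2}a_n$ survives in that branch. No ``adjustment'' to $144$ is needed, since a smaller constant gives a stronger bound.
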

\begin{proof}
By taking absolute value if necessary, we may assume that all $c_{n}$ are positive numbers.
Let $x_n'=\frac{x_n}{\norm{x_n}\sqrt{B}}$
  and let $T_n\colon H\rightarrow H$ be the positive trace-class operator defined by $$T_n(x)\coloneq \ip{x}{x_{n}'}x_{n}' \quad \text{for all }x\in H \text{ and all }n\in\N.$$
  Note that $\text{tr}(T_n)=\frac{1}{B}.$
    We then define $S\colon H\rightarrow H$ to be the frame operator associated with $\set{B^{-1/2}c_{n}x_{n}}_{n\in\N}.$
    That is,   
    $$S\coloneq\sum_{n\in \N} c^2_{n}\norm{x_n}^2T_n\leq \textbf{I}.$$ 
    Let $\epsilon=\min\bigparen{\frac{A}{3B},\frac{\sqrt{B}}{2C}}$, where $C$ is the absolute constant described in Theorem \ref{binary_selector_theorem}. Let $K_0=0$, $K_1=1$ and let $H_1=\set{0}$. We then construct a sequence of subspaces $\set{H_j}_{j\in \N}$ of $H$ inductively as follows. For $j\geq 1$ we define $H_{j+1}$ by 
    $$H_{j+1}=\clspan\set{P_{(H_1\oplus H_2\cdots \oplus H_j)^\perp}T_n(H)\,|\,1\leq n\leq K_j},$$
    and choose $K_{j+1}\in \N$ large enough that 
    \begin{equation}
    \label{choice_of_Kn}
    \text{tr}\Bigparen{P_{(H_1\oplus H_2\cdots \oplus H_{j
+1})}\Bigparen{\sum_{n=K_{j+1}}^\infty 
     c^2_{n}\norm{x_n}^2T_n}P_{(H_1\oplus H_2\cdots \oplus H_{j
    +1})}}\leq \eta_{j+1},
    \end{equation}
    where $\eta_0=0$ and $\eta_j=6^{-2}4^{-j}\epsilon^2$ for $j\geq 1$.
Note that $T_n(H)\subseteq H_1\oplus H_2 \oplus \cdots \oplus H_{j+2}$ for all $1\leq n\leq K_{j+1}$. Let $M_j=(H_{j+1}\oplus H_{j+2})^\perp$ for $j\geq 0.$ Note that $P_{M_0}T_1P_{M_0}$ is the zero operator. 
Now, since $$P_{M_j}\bigparen{\sum_{n=K_{j}+1}^{K_{j+1}} c_{n}^2\norm{x_n}^2T_{n}}P_{M_j}\Eq P_{H_1\oplus\cdots \oplus H_j}\bigparen{\sum_{n=K_{j}+1}^{K_{j+1}} c_{n}^2\norm{x_n}^2T_{n}}P_{H_1\oplus\cdots \oplus H_j},$$
it follows that 
\begin{align}
\label{final_trace_estimate}
\begin{split}
\text{tr}\Bigparen{P_{M_j}\Bigparen{\sum_{n=K_{j}+1}^{K_{j+1}} c_{n}^2\norm{x_n}^2T_{k}}P_{M_j}}&\Eq \text{tr}\Bigparen{P_{H_1\oplus\cdots \oplus H_j}\Bigparen{\sum_{n=K_{j}+1}^{K_{j+1}} c_{n}^2\norm{x_n}^2T_{n}}P_{H_1\oplus\cdots \oplus H_j}}\\
&\Le \text{tr}\Bigparen{P_{H_1\oplus \cdots \oplus H_{j
}}\Bigparen{\sum_{n=K_{j+1}}^\infty 
     c_{n}^2\norm{x_n}^2T_n}P_{H_1\oplus \cdots \oplus H_{j
    }}}\\
    &\Le \eta_{j},
\end{split}
\end{align}
for all $j\geq 0.$ For each $j\geq 0$ let $I_j=\set{n\,|\,K_{j}+1\leq n\leq K_{j+1}}.$
Then by Lemma \ref{sampling_lemma} and Equation (\ref{final_trace_estimate}), there exists some finite set $I_j'$, some $\beta\in \N$, and some sampling function $\sigma_j:I_j'\rightarrow [K_j+1,K_{j+1}]$ such that 
\begin{equation}
\label{operator_relationship_inequality}    
-\frac{\epsilon}{2} P_{M_j^\perp}-6\sqrt{\eta_j}\,\textbf{I}\leq \frac{1}{2^\beta}\sum_{k\in I_j'}T_{\sigma_j(k)}-\sum_{n\in I_j}c^2_{n}\norm{x_n}^2T_{n}\leq \frac{\epsilon}{2} P_{M_j^\perp}+6\sqrt{\eta_j}\,\textbf{I},
\end{equation}
 where $\beta\in\N$ is an universal constant such that $1<2^{\beta}\frac{B\epsilon^2}{4C^2}\leq 2.$ Moreover, we have 
\begin{equation}
\label{cardinality_upper_bound}
\#\Lambda_n\coloneq\#\bigset{k\in I'_j\,|\,\sigma_j(k)=n}\leq 2^{\beta+1}c_n^2\norm{x_n}^2,
\end{equation}
for all $K_{j}+1\leq n\leq K_{j+1}.$ Since $\set{c_nx_n}\inN$ is a frame for $H$, we have $$\sum_{j=0}^\infty P_{M_j^\perp}=\sum_{j=0}^\infty P_{H_{j+1}\oplus H_{j+2}}=2\textbf{I}$$
It then follows from Equation (\ref{operator_relationship_inequality}) that 
$$-2\epsilon\textbf{I}\Le \frac{1}{2^\beta}\sum_{j\geq 0}\sum_{k\in I_j'}T_{\sigma_j(k)}-S\leq 2\epsilon\textbf{I}.$$
By re-enumerating $\cup_{j\geq 0} I_j'$, it follows that there exists some sampling function $\sigma\colon\N\rightarrow \N$ such that 
$\bigset{ \frac{x_{\sigma(n)}}{\norm{x_{\sigma(n)}}}}_{n\in\N}$ 
is a frame for $H$ with frame bounds $2^{\beta}\frac{A}{3}$ and $3\cdot 2^\beta B$. Moreover, since $\epsilon=\min\bigparen{\frac{A}{3B},\frac{\sqrt{B}}{2C}}$, we have \[\#\set{k\in\N \,|\,\sigma(k)=n}\leq \max(144C^2BA^{-2}, 64C^4B^{-2})|c_n|^2\norm{x_n}^2\] for all $n\in\N.$  \qedhere

\qedhere

\end{proof}

We are now ready to prove our main theorem as follows. It is worth noting that unconditional Schauder frames are sometimes defined as a pair of sequences such that Eqaution (\ref{frame_reconstruction_formula}) holds with the unconditional convergence of the series in the norm of $H$ for all $x\in H$ in literature. The advantage of formulating unconditional Schauder frames in terms of a single sequence is that it allows one to establish the following equivalence between unconditional Schauder frames and sequences that can be rescaled to form a frame for $H$ via Theorem \ref{unsch_scalable_lemma}.
\begin{theorem}
\label{characterization_uncondition_Scf}
    Let $\set{x_n}\inN$ be a sequence in $H$. The following statements are equivalent:
\begin{enumerate}
\setlength\itemsep{0.5em}
    \item [\textup{(a)}] $\set{x_n}\inN$ is an unconditional Schauder frame for $H$.
\item [\textup{(b)}] There exists some sequence of scalars $(c_n)\inN$ such that $\set{c_nx_n}\inN$ is a frame for $H$.
    
    \item [\textup{(c)}] There exists some sequence $\set{y_n}\inN\subseteq H$ such that $$x\Eq\sumli\ip{x}{x_n}\,y_n\quad \text{for all }x\in H,$$
    with the unconditional convergence of the series in the norm of $H.$
    \item [\textup{(d)}] There exists a subsequence $(n_k)_{k\in I}\subseteq \N$ such that $\bigset{\frac{x_{n_k}}{\norm{x_{n_k}}}}_{k\in I}$ is a frame for $H$ with $x_{n_k}\notin \text{span}\set{x_{n_j}}$ for any distinct $k,j\in I.$
\end{enumerate}
In particular, if $\set{x_n}\inN$ is semi-normalized, then $\set{x_n}\inN$ contains a frame for $H$ provided one of the conditions (a)--(d) holds.
\end{theorem}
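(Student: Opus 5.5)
The plan is to prove the cycle (a)$\Rightarrow$(b)$\Rightarrow$(c)$\Rightarrow$(a), and then (b)$\Leftrightarrow$(d) separately; the semi-normalized statement will follow at the end.

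\emph{The cycle (a)$\Rightarrow$(b)$\Rightarrow$(c)$\Rightarrow$(a).} For (a)$\Rightarrow$(b), apply Theorem \ref{unsch_scalable_lemma} directly. For (b)$\Rightarrow$(c), let $\set{z_n}$ be the canonical dual frame of the frame $\set{c_nx_n}$. The frame reconstruction, with the roles of frame and dual interchanged, gives $x=\sum\ip{x}{c_nx_n}z_n=\sum\ip{x}{x_n}\,(\bar c_n z_n)$ with unconditional convergence. So $y_n=\bar c_nz_n$ works. For (c)$\Rightarrow$(a), (c) says exactly that $\set{y_n}$ is an unconditional Schauder frame with coefficient functionals $\set{x_n}$. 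Theorem \ref{unsch_scalable_lemma} applied to this pair gives nonzero scalars $d_n$ such that $\set{\bar d_n^{-1}x_n}$ is a frame. This is (b) for $\set{x_n}$, and (b)$\Rightarrow$(a) holds by the same dual-frame argument, using $x=\sum\ip{x}{z_n}c_nx_n$. Indices with $c_n=0$ or $x_n=0$ contribute nothing and cause no problem.

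\emph{(d)$\Rightarrow$(b).} Put $c_{n_k}=\norm{x_{n_k}}^{-1}$ and $c_n=0$ for every other $n$. The frame inequality is unchanged by adding zero vectors, so $\set{c_nx_n}$ is a frame.

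\emph{(b)$\Rightarrow$(d).} First apply Lemma \ref{upper_bdd_cardinality_control}. It yields a map $\sigma$ such that $\set{u_k}$, with $u_k=x_{\sigma(k)}/\norm{x_{\sigma(k)}}$, is a frame with bounds $A',B'$. Zero vectors never occur here, since $T_n$ is only defined for $x_n\neq0$, so we may discard them beforehand. The difficulty is that $\sigma$ need not be injective, and distinct indices may give parallel vectors. I expect this to be the main obstacle. The multiplicity bound stated in the lemma depends on $|c_n|^2\norm{x_n}^2$, which need not be uniformly bounded, so it does not help directly.

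Instead, use the upper frame bound itself. Fix a unit vector $u=u_k$ and let $N$ be the number of indices $j$ with $u_j$ parallel to $u$, that is, $u_j=\lambda u$ with $|\lambda|=1$. Testing the upper bound at $x=u$ gives $N\le\sum_j|\ip{u}{u_j}|^2\le B'$. Hence every line $\C u$ contains at most $B'$ of the $u_j$.

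Now choose one representative index from each such class. The parallel vectors in a class all give the same value $|\ip{x}{u_j}|^2$, so the sum over the representatives satisfies
$$\frac{A'}{B'}\norm{x}^2\le \sum_{\text{rep}}|\ip{x}{u_j}|^2\le B'\norm{x}^2.$$
The representatives correspond to distinct original indices $n_k$, and no $x_{n_k}$ is a scalar multiple of (i.e.\ lies in the span of) another $x_{n_j}$. This gives (d), after listing the $n_k$ in increasing order if a subsequence in the strict sense is wanted.

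\emph{Semi-normalized case.} If $m\le\norm{x_n}\le M$, then multiplying the normalized frame in (d) by the bounded scalars $\norm{x_{n_k}}\in[m,M]$ preserves the frame property, with bounds $m^2A'/B'$ and $M^2B'$. Thus $\set{x_{n_k}}$ is a frame contained in $\set{x_n}$.
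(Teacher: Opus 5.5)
Your proof is correct, but it differs from the paper's in two places.

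For (a)$\Leftrightarrow$(c), the paper cites Stoeva--Balazs (Corollary 3.9 of their paper). You instead get (c)$\Rightarrow$(b) by applying Theorem~\ref{unsch_scalable_lemma} to the swapped pair $(\set{y_n},\set{x_n})$. This keeps the argument within the tools the paper already uses.

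The more substantive difference is the removal of duplicates in the step to (d). The paper first partitions the $x_n$ into classes $\Gamma_{n_k}$ of mutually parallel vectors and writes each class as multiples $c_{n,k}x_{n_k}$ of one representative. It then applies Lemma~\ref{upper_bdd_cardinality_control} and combines the lemma's multiplicity estimate with $\sum_{n\in\Gamma_{n_k}}|c_{n,k}|^2\norm{x_{n_k}}^2\le B$, which is the upper bound tested at $x_{n_k}$. This shows each representative is sampled a uniformly bounded number of times, so repetitions can be dropped.

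You use only the qualitative conclusion of the lemma. You then test the upper bound $B'$ of the normalized output frame at each $u_k$: a frame of unit vectors has at most $B'$ members on any line. Keeping one index per line therefore costs at most a factor $B'$ in the lower bound. This handles repetitions of $\sigma$ and distinct parallel $x_n$ in one step, needs no pre-grouping, and gives explicit bounds $A'/B'$ and $B'$. The paper's route instead yields the explicit cardinality bound in terms of $c_n$, which it reuses in part (a) of the remark after the theorem. Both arguments rest on the same fact: an upper frame bound controls the total mass carried by a single line.

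One side comment of yours is inaccurate, though it does not affect your argument. For any frame with upper bound $B$, testing the upper inequality at $c_nx_n$ gives $\norm{c_nx_n}^2\le B$. So the lemma's per-index multiplicity bound $\max(144C^2BA^{-2},64C^4B^{-2})\,|c_n|^2\norm{x_n}^2$ is in fact uniformly bounded. What that bound does not control is distinct indices carrying parallel vectors, which is why the paper groups them first.
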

\begin{proof} 
(a)$\Leftrightarrow$(b) By Theorem \ref{unsch_scalable_lemma}, it remains to prove the direction from (b) to (a). Let $\set{y_n}\inN$ be the canonical dual frame associated with $\set{c_nx_n}\inN$. It follows that $\set{x_n}\inN$ is an unconditional Schauder frame for $H$ (with coefficient functionals $\set{\overline{c_n}y_n}\inN).$ \\
\smallskip

(a)$\Leftrightarrow$(c) The equivalence between (a) and (c) was established in \cite[Corollary 3.9]{SB13}.\\
\smallskip

(d)$\Rightarrow$(a) Likewise, let $\set{y_k}_{k\in I}$ be the canonical dual frame associated with $\bigset{\frac{x_{n_k}}{\norm{x_{n_k}}}}_{k\in I}$. Let $\set{z_k}\inN$ be the sequence with $z_k=y_k\norm{x_{n_k}}^{-1}$ if $k\in I$ and $z_k=0$ if $n\notin I.$ It then follows that 
$$x\Eq\sum_{k\in I}\ip{x}{y_{k}}\frac{x_{n_k}}{\norm{x_{n_k}}}\Eq \sum_{k\in \N}\ip{x}{z_k}\,x_k\quad \text{for all }x\in H,$$
with the unconditional convergence of series in the norm of $H.$\\
\smallskip

(a)$\Rightarrow$(d) 
 For each $k\in \N$ we define $\Gamma_k=\bigset{x_n\,|\,x_n\in\text{span}\set{x_k}}.$
    Let $n_1=1$ and let $(n_k)_{k\in\N}\subseteq \N$ be an increasing sequence such that $$\set{x_n}\inN=\bigcup_{k\in \N}\Gamma_{n_k}\quad \text{and}\quad \Gamma_{n_k}\cap \Gamma_{n_j}=\emptyset\quad\text{if } k\neq j.$$
Without loss of generality, we may assume that $x_{n_k}\neq 0$ for all $k\in\N$. By Theorem \ref{unsch_scalable_lemma}, there exists some sequence of scalars such that $\set{c_{n,k}x_{n_k}}_{n\in \Gamma_{n_k},\,k\in\N}.$ Now by Lemma \ref{upper_bdd_cardinality_control}, there exists a sampling function $\sigma\colon \N\rightarrow \N$ such that $\set{x_{n_{\sigma(k)}}}_{k\in\N}$ can be normalized to form a frame for $H.$ In particular, $$\#\Lambda_k\coloneq\#\set{j\in\N \,|\,\sigma(j)=k}\leq \max(144C^2BA^{-2}, 64C^4B^{-2})\sum_{n\in \Gamma_{n_k}}|c_{n,k}|^2\norm{x_{n_k}}^2,$$
where $A,B$ are frame bounds of $\bigset{c_{n,k}x_{n_k}}_{k\in\N}$ and $C$ is the absolute constant described in Theorem \ref{binary_selector_theorem}. Then for each $k\in \N$ we have $$\norm{x_{n_k}}^4\sum_{n\in \Gamma_{n_k}}|c_{n,k}|^2\Eq\sum_{n\in \Gamma_{n_k}}|\ip{x_{n_k}}{c_{n,k}x_{n_k}}|^2\Le B\norm{x_{n_k}}^2.$$
It follows that $$\sum_{n\in \Gamma_{n_k}}|c_{n,k}|^2\norm{x_{n_k}}^2<B\quad \text{for all }k\in\N.$$
Let $L=\max(144C^2BA^{-2}, 64C^4B^{-2}).$ Statement (d) then follows from the inequality
$$L^{-2}\sum_{k\in\N}\bigabs{\bigip{x}{\frac{x_{n_{\sigma(k)}}}{\norm{x_{n_{\sigma(k)}}}}}}^2\Le \sum_{k\in \text{Range}(\sigma)}\bigabs{\bigip{x}{\frac{x_{n_{k}}}{\norm{x_{n_{k}}}}}}^2\Le \sum_{k\in\N}\bigabs{\bigip{x}{\frac{x_{n_{\sigma(k)}}}{\norm{x_{n_{\sigma(k)}}}}}}^2.$$

Finally, assume that there exists $0<m\leq M$ such that $$m\Le \norm{x_n}\Le M\quad \text{for all }n\in\N.$$ Since 
$$ M^{-2} \sum_{k\in \text{Range}(\sigma)}|\bigip{x}{x_{n_k}}|^2\Le \sum_{k\in \text{Range}(\sigma)}\bigabs{\bigip{x}{\frac{x_{n_{k}}}{\norm{x_{n_{k}}}}}}^2\Le m^{-2} \sum_{k\in \text{Range}(\sigma)}|\bigip{x}{x_{n_k}}|^2,$$
it follows that $\set{x_{n_k}}_{k\in \text{Range}(\sigma)}$ is a frame for $H.$
\end{proof}


\begin{remark}
 (a) In general, one should not expect an arbitrary unconditional Schauder frame for $H$ to contain a frame for $H$. For example, both $\set{n^{-1}e_n}\inN$ and $\set{ne_n}\inN$ are unconditional Schauder frames for $H$, but neither of them contains a frame for $H.$ However, by Lemma \ref{upper_bdd_cardinality_control}, if a sequence can be rescaled to form a frame by using sufficiently small sequence of scalars, then it contains a frame for $H$. \medskip

(b) 
Theorem \ref{characterization_uncondition_Scf} (d) provides a clearer characterization of the scalars chosen to rescale  unconditional Schauder frames into a frame than Theorem \ref{unsch_scalable_lemma} when the notion of an unconditional Schauder frame is formulated as a single sequence. Specifically, for any unconditional Schauder frame  $\set{x_n}\inN$ for $H$ there always exists a subset $I\subseteq \N$ such that one can always rescale $\set{x_n}\inN$ into a frame by using the rescaling scalars $c_n=\norm{x_n}^{-1}$ if $n\in I$ and $c_n=0$ if $n\notin I$. Let $\set{y_n}\inN$ be the sequence of coefficient functionals associated with $\set{x_n}\inN$,
we remark that it remains unclear to us whether there exists some subset $J\subseteq\N$ such that both $\bigset{\frac{x_n}{\norm{x_n}}}_{n\in J}$ and $\set{\norm{x_n}y_n}_{n\in J}$ are frames for $H.$ By Orlicz's Theorem (\cite{Or33}), $\set{\norm{x_n}y_n}_{n\in J}$ must be a Bessel sequence.  However, the existence of a lower frame bound for $\bigset{\norm{x_n}y_n}_{n\in J}$ remains unknown. 
\qeddef
\end{remark}

\section{Applications}
\label{applications}
This section is devoted to several applications of our main results. We begin with applications to Gabor systems. Let $g_1,\dots g_N$ be nonzero functions in $L^2(\R^d)$ and let $\Gamma_1,\dots\Gamma_N\subseteq \R^{2d}$ be countable subsets. The associated \emph{Gabor system} (or \emph{Weyl-Heisenberg system}) $\Gc_N(\phi_i,\Gamma_i)$ is the subset $$\Gc_N(\phi_i,\Gamma_i)\Eq \bigset{M_bT_a\phi_i\,|\,(a,b)\in \Gamma_i,1\leq i\leq N},$$
where $M_b\colon L^2(\R^d)\rightarrow L^2(\R^d)$ is the modulation operator defined by $(M_bg)(x)=e^{2\pi ib\cdot x}g(x)$ and $T_a\colon L^2(\R^d)\rightarrow L^2(\R^d)$ is the translation operator defined by $(T_ag)(x)=g(x-a).$ The functions $\phi_1,\dots,\phi_N$ are usually called the \emph{window functions} or \emph{generators} and the sets $\Gamma_1,\dots,\Gamma_N$ are the associated sets of time-frequency shifts.
Unless otherwise specified, we write $\Gc_N(\phi_i,\Gamma_i)$ to denote the Gabor system generated by functions $\phi_1,\dots,\phi_N$ in $L^2(\R^d)$ and some countable subsets  $\Gamma_1,\dots\Gamma_N$ in $\R^{2d}.$ In the case $N=1$, we will simply write $\Gc(\phi,\Gamma).$

Next, we provide a brief introduction to \emph{modulation spaces}. 
\begin{definition}\label{Modulation_definition}
Fix a nonzero Schwartz function $\psi\in S(\R)$. 
\begin{enumerate}\setlength\itemsep{0.5em}
    \item [\textup{(a)}] The \emph{short-time Fourier transform} of $f\in L^2(\R^d)$, denoted by $V_{\psi}f$, is the complex-valued measurable function on $\R^{2d}$ defined by  $$V_{\psi}f(x,w)\Eq \overline{\ip{M_wT_x \psi}{f}} \Eq  \ip{f}{M_wT_x \psi}.$$
     \item [\textup{(b)}] For $1\leq p \leq 2$, the \emph{modulation space} $M^{p}(\R^d)$ is the space consisting of 
      $f\in L^2(\R^d)$ for which $\norm{V_\psi f}_{L^{p}(\R^{2d})}$ is finite, i.e.,
     $$M^{p}(\R^d)=\bigset{f \in L^2(\R^d)\,\big|\,  \norm{f}_{M^{p}(\R^d)}\Eq \norm{V_\psi f}_{L^{p}(\R^{2d})}<\infty}.$$     
    \end{enumerate}
    In particular, the modulation space $M^1(\R^d)$ is usually called \emph{Feichtinger algebra} in literature.
    \qeddef
\end{definition} 

 The discovery of modulation spaces can be traced back to early 1980s by Feichtinger. Through a series of subsequent collaborations between Feichtinger and Gr\"{o}chenig, these spaces have been recognized as fundamental to the development of time–frequency analysis. Loosely speaking, modulation spaces are a family of function spaces which classify functions based on their joint time and frequency localization (via short-time Fourier transform).  We remark that, in light of the main focus of this paper, the definition of modulation spaces that is presented above is merely a specific family of modulation spaces commonly found in the literature.
 For more details and related applications of modulation spaces, we refer to \cite{Gro01} and \cite{BO20}.

\subsection{Olson-Zalik Conjecture} Olson and Zalik formulated the conjecture in \cite{OZ92} that no system of translates generated by a function in $L^2(\R^d)$ can be a Schauder basis. Here a system of translates generated by finitely many functions $g_1,\dots, g_N\subseteq L^2(\R^d)$ is the set $\set{T_{a_i}g_i\,|\,a_i\in\Lambda_i,\,1\leq i\leq N}$ for some $\Lambda_1,\dots,\Lambda_N\subseteq\R^d.$ Consequently, systems of translates are just special cases of Gabor systems in which no frequency shifts are involved. In the same paper where this conjecture was proposed, they showed that no systems of translates generated by a single generator can be a Riesz basis for $L^2(\R)$. Christensen, Heil and Deng then proved in \cite{CDH99} that there does not exist any frame of translates generated by finitely many function in $L^2(\R^d).$ Lev and Tselishchev further proved that no system of translates with finitely many generators can be an unconditional Schauder frame for $L^p(\R^d)$ for any $1\leq p\leq 2$ in \cite{LT25}. 

Regarding subspaces of $L^2(\R^d)$ that do not admit Riesz bases, frames or unconditional Schauder frames that are formed by systems of translates, it was shown by the author in \cite{PY25a} that no modulation space $M^p(\R)$ with $1<p\leq 2$ admits an unconditional Schauder frame of translates with a single generator. Assuming that $S$ is an infinite-dimensional closed subspace of $L^2(\R^d)$ that contains a nonzero function in $M^1(\R^d)$ and is closed under modulation (i.e., there exists some nonzero $b\in \R^d$ such that $M_b f\in S$ for some nonzero $b\in\R$ and all $f\in S$), the author proved in \cite{PY25c} that no such closed subspace admits a frame of translates generated by finitely many functions. We apply our main result below to characterize a broader class of subspaces that fail to admit unconditional Schauder frames of translates generated by finitely many functions.

\label{OZconjecture}
\begin{theorem}
\label{distance_criterion}
    Fix $1\leq p\leq 2.$
Let $S$ be an infinite-dimensional closed subspace of $L^2(\R^d)$. Let $\Gc(\phi,\Lambda)$ be a Bessel sequence in $S$ with $\phi \in M^p(\R^d)$. Assume that $\clspan\bigset{\Gc(\phi,\Lambda)}$ is infinite-dimensional. Then for any unconditional Gabor Schauder frame $\Gc_N(g_j,\Gamma_j)$ for $S$ with $g_j\in M^{q_j}(\R^d)$ for some $1\leq q_j\leq \frac{2p}{3p-2}$ for all $1\leq j\leq N$, there exist some subsets $\Lambda'\subseteq \Lambda$, $\Gamma_j'\subseteq \Gamma_j$ for all $1\leq j\leq N$, such that 
$\sup\limits_{x\in \Lambda'} d(x,\cup_{j=1}^N\Gamma_j')<\infty$.
\end{theorem}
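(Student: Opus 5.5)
Since $\Gc_N(g_j,\Gamma_j)$ is an unconditional Schauder frame for $S$, Theorem \ref{characterization_uncondition_Scf} applied in the Hilbert space $S$ gives a set $I$ of indices. The normalized system $\bigset{\frac{M_bT_ag_j}{\norm{g_j}}\,|\,(a,b)\in\Gamma_j',\,1\le j\le N}$ is then a frame for $S$, say with bounds $A,B$. Here we use that time-frequency shifts are unitary, so $\norm{M_bT_ag_j}=\norm{g_j}$ and the system is already semi-normalized. Thus the Gabor system $\Gc_N(g_j,\Gamma_j')$ itself is a frame for $S$, where $\Gamma_j'\subseteq\Gamma_j$ are the sub-index sets singled out by (d). The whole problem is therefore reduced to a statement about a genuine frame, and the rest is the usual localization argument.

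Next, fix $x=(a,b)\in\Lambda$ and set $f=M_bT_a\phi\in S$. The lower frame bound gives
\[
A\norm{\phi}_2^2\le\sum_{j=1}^N\sum_{z\in\Gamma_j'}\frac{|\ip{M_bT_a\phi}{\pi(z)g_j}|^2}{\norm{g_j}^2}=\sum_{j}\sum_{z\in\Gamma_j'}\frac{|V_{g_j}\phi(z-x)|^2}{\norm{g_j}^2},
\]
up to unimodular phases, where $\pi(z)$ denotes the time-frequency shift by $z$. For $\phi\in M^p$ and $g_j\in M^{q_j}$, the short-time Fourier transform $V_{g_j}\phi$ lies in a mixed Wiener amalgam space $W(L^\infty,L^r)(\R^{2d})$ with $\frac1r=\frac1p+\frac1{q_j}-1$. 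The constraint $q_j\le\frac{2p}{3p-2}$ is exactly what gives $r\le 2$, so $|V_{g_j}\phi|^2$ is summable on a lattice scale. Then I split the sum into $z$ with $d(z,x)\le R$ and $z$ with $d(z,x)>R$. The far part is at most $\sum_j\sup_{y}\#(\Gamma_j'\cap(y+Q))\cdot\norm{V_{g_j}\phi\cdot 1_{\{|w|>R\}}}^2_{W(L^\infty,\ell^2)}$. This tends to $0$ as $R\to\infty$ uniformly in $x$, provided the sets $\Gamma_j'$ are relatively separated. That separation follows from the Bessel property of $\Gc_N(g_j,\Gamma_j')$ on $S$ in the standard way, by testing against a time-frequency shift of a fixed $h\in S$ with nonvanishing $V_{g_j}h$ near the origin. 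If that argument needs $S=L^2$, I would instead use $\phi$ itself, since $\phi\in S$.

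Choose $R$ so large that the far part is below $\frac A2\norm{\phi}^2$. Then every $x\in\Lambda$ has some $z\in\cup_j\Gamma_j'$ with $d(x,z)\le R$, so we may take $\Lambda'=\Lambda$ and obtain $\sup_{x\in\Lambda'}d(x,\cup_j\Gamma_j')\le R$. The hypotheses that $\clspan\Gc(\phi,\Lambda)$ is infinite-dimensional and that $\Gc(\phi,\Lambda)$ is Bessel would only be needed if the statement intends $\Lambda'$ to be infinite, or to avoid degenerate cases such as $\phi=0$. In that setting I would take $\Lambda'$ to be an infinite uniformly discrete subset of $\Lambda$.

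The main obstacle is the uniform tail estimate. It requires both the amalgam bound for $V_{g_j}\phi$ under the exponent condition and the relative separation of the $\Gamma_j'$ coming from the frame property on a proper subspace $S$. I would prove the latter by a local-counting argument with the lower frame bound applied at $f=M_bT_a\phi$, or by the upper Bessel bound applied to the same test functions.
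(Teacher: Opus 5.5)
Your proof is correct, but it takes a different route from the paper's.

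\textbf{The paper's route.} The paper first uses Feichtinger's theorem, together with the Bessel hypothesis on $\Gc(\phi,\Lambda)$ and the infinite-dimensionality of its span. This extracts an infinite $\Lambda'\subseteq\Lambda$ such that $\Gc(\phi,\Lambda')$ is a Riesz basis for its closed span. It then applies Theorem \ref{characterization_uncondition_Scf} to obtain frame sets $\Gamma_j'$. Finally it cites \cite[Corollary 4.4]{PY25c} for the distance bound.

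\textbf{Your route.} You share the middle step. You also rightly note that time-frequency shifts are unitary, so the normalized subsystem is just a rescaling of $\Gc_N(g_j,\Gamma_j')$ by finitely many constants, and hence a frame for $S$. You then replace the Riesz-sequence extraction and the external citation with a direct localization argument:
\begin{itemize}
\item the lower frame bound applied at $\pi(x)\phi$ for $x\in\Lambda$;
\item the amalgam estimate $V_{g_j}\phi\in W(L^\infty,\ell^r)$ with $\frac1r=\frac1p+\frac1{q_j}-1\ge\frac12$, which is exactly where $q_j\le\frac{2p}{3p-2}$ enters;
\item a tail bound that is uniform in $x$, coming from relative separation of the $\Gamma_j'$.
\end{itemize}

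\textbf{What each approach buys.} Yours is self-contained. It gives the conclusion with $\Lambda'=\Lambda$, and it needs neither Feichtinger's theorem nor the Bessel property of $\Gc(\phi,\Lambda)$. The only use of infinite-dimensionality is to guarantee $\phi\neq0$. The statement as literally written is trivially satisfied by a singleton $\Lambda'$, so its content lies in how large $\Lambda'$ is. Your version yields the strongest possible form. The paper's yields an infinite $\Lambda'$ that additionally carries a Riesz sequence, which is what the cited corollary uses.

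\textbf{The separation step needs one fix.} The test vector does not need to lie in $S$. Every $\pi(z)g_j$ with $z\in\Gamma_j'$ lies in $S$. Hence $\ip{f}{\pi(z)g_j}=\ip{P_Sf}{\pi(z)g_j}$ for all $f\in L^2(\R^d)$, and the Bessel bound on $S$ is automatically a Bessel bound on all of $L^2(\R^d)$. The standard test with $f=\pi(w)g_j$, for arbitrary $w\in\R^{2d}$, then applies. Combined with continuity of $V_{g_j}g_j$ and $|V_{g_j}g_j(0)|=\norm{g_j}^2$, it gives $\sup_{w}\#\bigl(\Gamma_j'\cap B_\delta(w)\bigr)<\infty$.

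Your proposed fallback does not work, for three reasons:
\begin{itemize}
\item $\phi$ itself need not belong to $S$.
\item $\pi(w)\phi\in S$ is known only for $w\in\Lambda$, but relative separation requires control in every cube.
\item The lower frame bound gives no upper control on point counts.
\end{itemize}
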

\begin{proof}
    By Feichtinger's conjecture (now known as Feichtinger's theorem, for example, see \cite[Section 4.2]{Bo18}),  $\Gc(\phi,\Lambda)$ can be decomposed to into a finite union of subsequences, each of which forms a Riesz basis for its closed linear span. Since $\clspan\bigset{\Gc(\phi,\Lambda)}$ is infinite-dimensional, there exists an infinite subset $\Lambda'\subseteq \Lambda$ such that $\Gc(\phi, \Lambda')$ is a Riesz basis for its closed span. Now by Theorem \ref{characterization_uncondition_Scf}, there exist subsets $\Gamma_j'\subseteq \Gamma_j$ such that $\Gc_N(g_j,\Gamma_j')$ is a frame for $L^2(\R^d).$ The statement then follows from \cite[Corollary 4.4]{PY25c}.
\end{proof}

\begin{corollary}
\label{non_existence_unschf_subspaces}
 Let $\phi\in M^1(\R^d)$ be an arbitrary nonzero function. Assume that $S\subseteq L^2(\R^d)$ is a closed subspace that contains $\Gc(\phi,\set{0}\times\Lambda)$ for some infinite  uniformly discrete subset $\Lambda$ of $\R^{d}$. Then $S$ does not admit any unconditional Schauder frame of translates with finitely many generators.   
\end{corollary}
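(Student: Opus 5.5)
Argue by contradiction, reducing to a statement about frames and then using the earlier results. Suppose $S$ admits an unconditional Schauder frame of translates $\set{T_{a}g_j\,|\,a\in\Gamma_j,\,1\le j\le N}$ with $g_1,\dots,g_N\in L^2(\R^d)$. By Theorem \ref{characterization_uncondition_Scf} (a)$\Rightarrow$(d), applied in the Hilbert space $S$, there are subsets $\Gamma_j'\subseteq\Gamma_j$ such that the normalized system $\set{T_ag_j/\norm{g_j}\,|\,a\in\Gamma_j'}$ is a frame for $S$. Since translations are isometries, every element has norm $\norm{g_j}$. The system is therefore semi-normalized, so $\Gc_N(g_j,\set{\cdot}\times\set{0})$ restricted to $\Gamma_j'$ is itself a frame of translates for $S$, with at most $N$ generators. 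This is the key point: we have passed from an unconditional Schauder frame to a genuine frame, which is what Theorem \ref{non_exist_frame_implies_unschfr} encodes.

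Next, produce an infinite Riesz sequence of modulations inside $S$. Because $\Lambda$ is uniformly discrete and $\phi\in M^1(\R^d)$ is nonzero, the system $\Gc(\phi,\set{0}\times\Lambda)=\set{M_b\phi}_{b\in\Lambda}$ is a Bessel sequence; this is standard for windows in the Feichtinger algebra and relatively separated index sets. By the Feichtinger theorem (as used in the proof of Theorem \ref{distance_criterion}), it splits into finitely many Riesz sequences. Its closed span is infinite-dimensional: since $\Lambda$ is infinite and $\phi\ne0$, distinct modulations $M_b\phi$ are linearly independent (a finite exponential sum times $\phi$ vanishing forces the sum to vanish on a set of positive measure, hence identically). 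Hence some infinite $\Lambda'\subseteq\Lambda$ gives a Riesz basis $\set{M_b\phi}_{b\in\Lambda'}$ of its span.

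Now apply Theorem \ref{distance_criterion} with $p=1$ to the Bessel system $\Gc(\phi,\set{0}\times\Lambda)$ and the unconditional Schauder frame of translates. When $p=1$ the allowed range $1\le q_j\le 2p/(3p-2)=2$ covers all of $L^2$, so every $g_j\in L^2(\R^d)=M^2(\R^d)$ qualifies. We obtain infinite $\Lambda''\subseteq\set{0}\times\Lambda$ and $\Gamma_j''\subseteq\Gamma_j\times\set{0}$ with $\sup_{x\in\Lambda''}d(x,\cup_j\Gamma_j'')<\infty$. Points of $\Lambda''$ have the form $(0,b)$ with $b$ ranging over an infinite, hence unbounded, uniformly discrete set. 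Points of $\cup_j\Gamma_j''$ lie on the line $\R^d\times\set{0}$. The distance from $(0,b)$ to that line is $\norm{b}$, which is unbounded, giving a contradiction.

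The main obstacle is checking that the hypotheses of Theorem \ref{distance_criterion} (and of the cited \cite[Corollary 4.4]{PY25c}) really apply. We must make sure that the subset $\Lambda'$ produced is infinite, and not merely one of finitely many pieces that might be finite, so that the unboundedness argument works. If the application with $p=1$ is problematic for general $L^2$ generators, I would instead directly invoke the frame-level nonexistence result from \cite{PY25c} for the frame of translates obtained in the first step.
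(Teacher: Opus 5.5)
Your proposal is correct and follows the paper's route: the paper likewise notes that $\Gc(\phi,\set{0}\times\Lambda)$ is Bessel and invokes Theorem~\ref{distance_criterion} with $p=1$ (so $q_j=2$ admits arbitrary $L^2$ generators), and the contradiction comes from the unbounded distance between $\set{0}\times\Lambda'$ and $\R^d\times\set{0}$. Your preliminary frame extraction and Feichtinger decomposition only replay steps already inside the proof of Theorem~\ref{distance_criterion}, but your checks that the span is infinite-dimensional and that the relevant $\Lambda'$ is infinite supply details the paper leaves implicit (its statement of that theorem does not itself record that $\Lambda'$ is infinite).
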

\begin{proof}
    It is known that $\Gc(\phi,\set{0}\times\Lambda)$ is a Bessel sequence when $\Lambda$ is uniformly discrete (for example, see \cite[Lemma 3.3]{FS07}). The statement then follows Theorem \ref{distance_criterion}.
\end{proof}

Using the fact that modulation spaces are invariant under Fourier transform, we obtain the following corollary.
\begin{corollary}
\label{uncertainty_principle_Fourier}
Let $S$ be an infinite-dimensional closed subspace of $L^2(\R^d)$ that is closed under Fourier transform. Assume that $S$ admits an unconditional Schauder frame of translates $\Gc_N(\phi_i,\Gamma_i\times\set{0})$ for some $\phi_1,\dots,\phi_N\in L^2(\R^d)$ and some subsets $\Gamma_1,\dots,\Gamma_N\subseteq\R^d$. Then 
     there exists some $1\leq i_0\leq N$ such that $\phi_{i_0}\notin M^p(\R^d)$ for any $1\leq p\leq \frac{4}{3}.$
\end{corollary}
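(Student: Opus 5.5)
Argue by contradiction using Theorem \ref{distance_criterion} with $p=4/3$. Then $\frac{2p}{3p-2}=\frac{8/3}{2}=\frac{4}{3}$, so the admissible exponent range $1\le q\le 4/3$ is exactly the range in the statement. Assume $S$ is closed under the Fourier transform and admits the unconditional Schauder frame of translates $\Gc_N(\phi_i,\Gamma_i\times\set{0})$. Assume also that every $\phi_i$ lies in $M^{q_i}(\R^d)$ for some $1\le q_i\le 4/3$. Since $M^{q}\subseteq M^{4/3}$ for $q\le 4/3$, each $\phi_i$ lies in $M^{4/3}(\R^d)$.

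Next, produce a Bessel Gabor sequence inside $S$ of pure modulations with an infinite-dimensional span. Since $S$ is Fourier-invariant and $\phi_1\in S$, we have $\widehat{\phi_1}\in S$. The Fourier transform of the translates $T_a\phi_1$, $a\in\Gamma_1$, gives $\widehat{T_a\phi_1}=M_{-a}\widehat{\phi_1}$ (sign convention aside). So $\set{M_{-a}\widehat{\phi_1}}_{a\in\Gamma_1}\subseteq S$, and this is a Gabor system $\Gc(\widehat{\phi_1},\set{0}\times(-\Gamma_1))$ with window $\widehat{\phi_1}\in M^{4/3}(\R^d)$, because modulation spaces are Fourier-invariant.

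This family must be Bessel with infinite-dimensional closed span. Unitarity of the Fourier transform transfers both properties from $\set{T_a\phi_1}_{a\in\Gamma_1}$. That family is Bessel: by Theorem \ref{characterization_uncondition_Scf} a subfamily of normalized translates forms a frame, so I would pass to the subsets $\Gamma_i'$ obtained there, where the normalized elements are just $T_a\phi_i/\norm{\phi_i}$ and hence form a Bessel sequence. Infinite-dimensionality holds for at least one $i$, since $S$ is infinite-dimensional and spanned by the finitely many families. I relabel so that this is $i=1$, and I replace $\Gamma_1$ by $\Gamma_1'$.

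Now apply Theorem \ref{distance_criterion} with $\Lambda=\set{0}\times(-\Gamma_1')$ and the frame $\Gc_N(\phi_j,\Gamma_j\times\set{0})$. This yields an infinite $\Lambda''\subseteq\Lambda$ and subsets $\Gamma_j''\times\set{0}$ with $\sup_{x\in\Lambda''}d(x,\cup_j\Gamma_j''\times\set{0})<\infty$. The points of $\Lambda''$ lie on $\set{0}\times\R^d$ and the comparison points lie on $\R^d\times\set{0}$. Hence the distance from $(0,-a)$ is at least $\norm{a}$, which must stay bounded over $a\in -\Lambda''$.

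The main obstacle is reaching a contradiction from this boundedness. I would try to secure that the relevant $\Gamma_1'$ (or the $\Lambda'$ produced inside the proof of Theorem \ref{distance_criterion}, which indexes a Riesz sequence) is infinite and uniformly discrete, hence unbounded. A Riesz sequence of modulations $M_b\widehat{\phi_1}$ must have separated frequencies, otherwise the Bessel bound fails while the lower Riesz bound demands separation. This gives $\sup\norm{a}=\infty$, which contradicts the bound. Therefore some $\phi_{i_0}$ lies in no $M^p$ with $1\le p\le 4/3$.
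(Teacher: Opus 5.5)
Your proposal is correct and follows essentially the paper's argument: you Fourier-transform the translates into modulations lying in $S$, use Theorem~\ref{characterization_uncondition_Scf} to obtain a Bessel family of modulations with infinite-dimensional span, and then contradict Theorem~\ref{distance_criterion} with $p=4/3$. Your endgame---an infinite Riesz-indexed, hence separated, set on $\set{0}\times\R^d$ cannot stay at bounded distance from $\R^d\times\set{0}$---makes explicit the step the paper leaves implicit; one small slip is that $\phi_1$ itself need not lie in $S$, but you only use $M_{-a}\widehat{\phi_1}=\widehat{T_a\phi_1}\in S$ for $a\in\Gamma_1$, which does hold.
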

\begin{proof}
     By assumption, $\Gc_{N}\bigparen{\widehat{\phi_i},\set{0}\times (-\Gamma_i)}$ is an unconditional Schauder frame for $S$. By Theorem \ref{characterization_uncondition_Scf} and Feichtinger Theorem, there exists some $1\leq i_0\leq N$ and an infinite subset $\Gamma_{i_0}'$ of $\Gamma_{i_0}$ such that $\Gc_{1}(\widehat{\phi}_{i_0},\set{0}\times -\Gamma_{i_0}')$ is a Riesz basis for its closed span.    
    Suppose to the contrary that $\phi_i\in M^p(\R)$ for some $1\leq p\leq \frac{4}{3}$ and all $1\leq i\leq N.$ Since modulation spaces are invariant under Fourier transform, we see that $\widehat{\phi}_{i_0}\in M^p(\R)$ for some $1\leq p\leq \frac{4}{3}$. Since $1\leq p\leq \frac{2p}{3p-2}$ if and only if $1\leq p\leq \frac{4}{3}$, we obtain a contradiction to Theorem \ref{distance_criterion}.
\end{proof}

We provide an illustrative example below how Theorem \ref{distance_criterion} improves the results in \cite{CDH99}, \cite{LT25} and \cite{PY25c}.
\begin{example}
    Let $\phi=e^{-\frac{\pi x^2}{2}}$ be the Gaussian function and let $\Lambda\subseteq \R$ be an arbitrary infinite uniformly discrete subset. Then every closed subspace containing $\Gc(\phi,\set{0}\times\Lambda)$ in $L^2(\R)$ does not admit an unconditional Schauder frame of translates. For example, let $\Lambda=a\Z\times b\Z$. It is known that $\clspan\set{\Gc(\phi,\Lambda)}=L^2(\R)$ if $0<ab\leq 1$ and that is a proper closed subspace of $L^2(\R)$ otherwise. Consequently, Corollary \ref{non_existence_unschf_subspaces} not only recovers the result in \cite{LT25} that $L^2(\R)$ does not admit any unconditional Schauder frames of translates with finitely many generators, but provides a broader class of closed subspaces that fails to admit unconditional Schauder frames of translates. 

    On the other hand, assume that $\Lambda$ is symmetric (not necessarily uniformly discrete). That is, $\Gamma=-\Gamma$. If $\clspan\set{\Gc(\phi,\Gamma)}$ admits an unconditional Schauder frame of translates, then by Corollary \ref{uncertainty_principle_Fourier}, at least one of its generators necessarily fails to belong to $M^p(\R)$ for any $1\leq p\leq \frac{4}{3}$. \qeddef  
\end{example}

\subsection{Beurling density associated with Gabor systems}
The study of Beurling density in connection with Gabor systems that form Schauder bases, Riesz bases, frames, and related structures has a long history
(\cite{RS94}, \cite{CDH99}, \cite{DH00}, \cite{Hei07}, \cite{GK24}). The central question is to characterize how densely the set of time–frequency shifts may be distributed in $\R^{2d}$. We first summarize known results in the following theorem. 
\begin{theorem}
\label{summary_density_result}
    Let $\Gc_{N}(\phi_i,\Lambda_i)$ be a Gabor system in $L^2(\R^d)$. The following statements holds:
    \begin{enumerate} \setlength\itemsep{0.5em}
        \item [\textup{(a)}](\cite{DH00}) If $\Gc_{N}(\phi_i,\Lambda_i)$ is a Schauder basis, then $D^{-}\bigparen{\cup_{i=1}^N\Lambda_i}\leq D^{+}\bigparen{\cup_{i=1}^N\Lambda_i}\leq 1$,
        \item [\textup{(b)}](\cite{CDH99}) If $\Gc_{N}(\phi_i,\Lambda_i)$ is a Riesz basis, then $D^{-}\bigparen{\cup_{i=1}^N\Lambda_i}= D^{+}\bigparen{\cup_{i=1}^N\Lambda_i}=1$,

     \item [\textup{(c)}](\cite{CDH99}) If $\Gc_{N}(\phi_i,\Lambda_i)$ is a frame, then $1\leq D^{-}\bigparen{\cup_{i=1}^N\Lambda_i}\leq D^{+}\bigparen{\cup_{i=1}^N\Lambda_i}<\infty$,

      \item [\textup{(d)}](\cite{LT25c}) If $\Gc_{N}(\phi_i,\Lambda_i)$ is a Schauder frame, then $0\leq D^{-}\bigparen{\cup_{i=1}^N\Lambda_i}\leq D^{+}\bigparen{\cup_{i=1}^N\Lambda_i}\leq\infty$,
    \end{enumerate}
  where $\cup_{i=1}^N\Lambda_i$ is the disjoint union of $\Lambda_1,\dots\Lambda_N.$
  
  In particular, all the lower and upper bounds in Theorem \ref{summary_density_result} are sharp.\qeddef
\end{theorem}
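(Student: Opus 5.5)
Since Theorem \ref{summary_density_result} only collects results from the literature, I will not reprove the density bounds. Instead, I will reduce each item to the cited source and then justify the sharpness claim by exhibiting explicit systems that attain the extreme values.

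For the inequalities themselves, I will quote each item directly. Item (a) is the density theorem for Gabor Schauder bases in \cite{DH00}. Items (b) and (c) are the Ramanathan--Steger type density theorems for finitely many windows in \cite{CDH99}. Item (d) is the result of \cite{LT25c}; apart from the trivial bounds $0\le D^-\le D^+\le\infty$, its content is that Schauder frames may realize both extremes. I will check that the density conventions of (\ref{Beruling_density}) match those used in the cited papers, and that the disjoint union $\cup_{i=1}^N\Lambda_i$ is counted with multiplicity.

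For sharpness, I plan to use the following examples, in this order.
\begin{enumerate}
\item[\textup{(1)}] Take $\phi=\chi_{[0,1)}$ and $\Lambda=\Z\times\Z$. Then $\Gc(\phi,\Lambda)$ is an orthonormal basis of $L^2(\R)$ with $D^-=D^+=1$. This single example shows that the bound $1$ is attained simultaneously in (a), (b) and the lower bound of (c).
\item[\textup{(2)}] For the upper bound in (c), take the lattices $a\Z\times b\Z$ with $ab<1$ and the same window. These give frames, since the system contains a tight frame obtained by oversampling, of density $(ab)^{-1}$. Hence $D^+$ may be arbitrarily large, so the finiteness in (c) cannot be improved to any uniform constant.
\item[\textup{(3)}] For (d), I first observe that adjoining arbitrary extra elements to a Schauder frame keeps it a Schauder frame, by assigning them zero coefficient functionals. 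Adjoining, for instance, infinitely many repeated copies of a time--frequency shift at every lattice point yields $D^+=\infty$.
\end{enumerate}

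The step I expect to be the main obstacle is the lower extreme in (d): exhibiting a Gabor Schauder frame with $D^-=0$. This cannot be obtained by elementary means, since frames have $D^-\ge1$. I would invoke the construction in \cite{LT25c} directly. If a self-contained route were needed, I would first build, \`a la Lev--Tselishchev, a Schauder frame supported on a sparse set of widely separated blocks of time--frequency shifts, with conditionally convergent expansions controlling the gaps.
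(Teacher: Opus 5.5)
Your overall route is the paper's. The paper gives no argument for this theorem; it attributes (a)--(d) and the sharpness remark to \cite{DH00}, \cite{CDH99} and \cite{LT25c}. Your plan of quoting those results, with the extreme $D^-=0$ in (d) taken from \cite{LT25c}, matches that. The problem is in the explicit sharpness examples you add: two of them are wrong as stated.

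In (2), it is false that $\Gc(\chi_{[0,1)},a\Z\times b\Z)$ is a frame whenever $ab<1$; this is Janssen's $abc$-problem, where the answer depends intricately on $a$ and $b$. For a counterexample, take $b=2$. Since $e^{4\pi i kx}$ is $\tfrac12$-periodic, you get $\sum_{k\in\Z}|\ip{f}{M_{2k}T_\alpha\chi_{[0,1)}}|^2=\tfrac12\int_\alpha^{\alpha+1/2}|f(x)+f(x+\tfrac12)|^2\,dx$. For $f=\sum_{j=0}^{L-1}(-1)^j\chi_{[j/2,(j+1)/2)}$, the function $f(x)+f(x+\tfrac12)$ vanishes off two intervals of length $\tfrac12$. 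Summing over $\alpha\in a\Z$ therefore gives a bound independent of $L$, while $\norm{f}^2=L/2$. So $\Gc(\chi_{[0,1)},\tfrac14\Z\times2\Z)$ has density $2$ but no lower frame bound.

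The repair is to oversample only in ways that keep the system a union of orthonormal bases, e.g.\ $\Lambda=\tfrac1n\Z\times\Z$. Each coset $(k/n,0)+\Z^2$ gives, up to unimodular factors, the image of the orthonormal basis under the unitary $T_{k/n}$. The system is then a tight frame with bound $n$ and $D^-(\Lambda)=D^+(\Lambda)=n$.

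In (3), the $\Lambda_i$ are sets and there are only $N$ of them, so a point occurs at most $N$ times in $\cup_{i=1}^N\Lambda_i$. ``Infinitely many repeated copies'' is therefore not a Gabor system in the sense of the statement. Instead, adjoin infinitely many distinct points in a fixed ball (or all of $\mathbb{Q}^{2d}$), each with zero coefficient functional, interleaved one at a time into the enumeration. Inserting zero terms does not affect convergence of the partial sums, so this is still a Schauder frame, now with $D^+=\infty$. Finally, for general $d$, example (1) should use $\chi_{[0,1)^d}$ and $\Z^{2d}$.
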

We remark that it was conjectured in \cite{DH00} that $D^{-}\bigparen{\cup_{i=1}^N\Lambda_i}= D^{+}\bigparen{\cup_{i=1}^N\Lambda_i}= 1$ if $\Gc_{N}(\phi_i,\Lambda_i)$ is a Schauder basis. This conjecture remains open. In particular, an affirmative resolution of this conjecture would also resolve the Olson–Zalik conjecture in the affirmative.

Consequently, the last missing piece in this direction is to determine the appropriate Beurling density condition for Gabor systems that form unconditional Schauder frames. Due to the absence of the upper frame inequality constraint, it is not hard to see there exist Gabor systems with infinite Beurling density that form unconditional Schauder frames (and hence Schauder frames) if one allows the associated coefficient functionals to contain infinitely many zero functions. The remaining question would be how small the associated Beurling density can be. We resolve this final aspect in the following theorem. Furthermore, we construct unconditional Schauder frames with infinite Beurling density for which all associated coefficient functionals are nonzero. The construction of such unconditional Schauder frames is inspired by \cite[Theorem 3.2]{FOSZ14}.
\begin{theorem}
\label{density_Gabor_unconditional_sf}
    Let $\Gc_N(\phi_i,\Lambda_i)$ be an unconditional Schauder frame for $L^2(\R^d).$ Then $$1\leq D^{-}\bigparen{\cup_{i=1}^N\Lambda_i}\leq D^{+}\bigparen{\cup_{i=1}^N\Lambda_i}\leq \infty,$$
    where $\cup_{i=1}^N\Lambda_i$ is the disjoint union of $\Lambda_1,\dots\Lambda_N.$
\end{theorem}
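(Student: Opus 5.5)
The upper bound $D^{+}\le\infty$ is trivial, so everything rests on the lower bound $D^{-}\bigparen{\cup_{i=1}^N\Lambda_i}\geq 1$. The plan is to reduce to the frame case, Theorem \ref{summary_density_result}(c). The tool for this reduction is Theorem \ref{characterization_uncondition_Scf}, which applies because a Gabor system with fixed windows is automatically semi-normalized.

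First, observe that time-frequency shifts are unitary, so $\norm{M_bT_a\phi_i}=\norm{\phi_i}$. Every element of $\Gc_N(\phi_i,\Lambda_i)$ therefore has norm in $\bigl[\min_i\norm{\phi_i},\max_i\norm{\phi_i}\bigr]$. These bounds are positive, since we may discard any zero window: the corresponding elements contribute nothing to the reconstruction formula. Hence $\Gc_N(\phi_i,\Lambda_i)$ is a semi-normalized unconditional Schauder frame. By the last assertion of Theorem \ref{characterization_uncondition_Scf}, it contains a frame for $L^2(\R^d)$. Explicitly, there are subsets $\Lambda_i'\subseteq\Lambda_i$ such that $\Gc_N(\phi_i,\Lambda_i')$ is a frame.

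Some care is needed here. Statement (d) selects one representative from each class of mutually parallel vectors, and distinct points of $\Lambda_i$ could in principle produce parallel vectors. Even so, the resulting frame is still of the form $\set{M_bT_a\phi_i\,|\,(a,b)\in\Lambda_i',\,1\leq i\leq N}$, where $\Lambda_i'$ is the set of surviving indices, because the subsequence is taken along the original indexing. So it is a Gabor system with the same windows over a sub-collection of the index sets. Now Theorem \ref{summary_density_result}(c) applies and gives $D^{-}\bigparen{\cup_i\Lambda_i'}\geq 1$.

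The last step is monotonicity of the lower Beurling density under inclusion. For every ball, $\#\bigparen{\cup_i\Lambda_i'\cap B_r(x)}\le\#\bigparen{\cup_i\Lambda_i\cap B_r(x)}$, counting multiplicity in the disjoint union. Taking the infimum over $x$ and the liminf in $r$ yields $1\le D^{-}\bigparen{\cup_i\Lambda_i'}\le D^{-}\bigparen{\cup_i\Lambda_i}$.

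The only real obstacle I expect is the bookkeeping in the middle step. One has to check that the frame extracted by Theorem \ref{characterization_uncondition_Scf} is genuinely a Gabor sub-system indexed by a subset of the disjoint union, so that the density result from \cite{CDH99} can be quoted. Repeated indices, or distinct $(a,b)$ giving identical or parallel functions, only reduce the counts and therefore never hurt the inequality. Sharpness of the lower bound follows from any Gabor Riesz basis, for example $\Gc(\chi_{[0,1]^d},\Z^{2d})$.
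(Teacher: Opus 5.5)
Your proposal is correct and follows essentially the same route as the paper. You extract a Gabor frame subsystem $\Gc_N(\phi_i,\Lambda_i')$ with $\Lambda_i'\subseteq\Lambda_i$ via Theorem \ref{characterization_uncondition_Scf}, making explicit the semi-normalization $\norm{M_bT_a\phi_i}=\norm{\phi_i}$ that the paper uses only implicitly, then apply Theorem \ref{summary_density_result}(c) and the monotonicity of $D^-$. The paper's proof additionally constructs an unconditional Schauder frame with $D^+=\infty$ whose coefficient functionals are all nonzero, but that is a sharpness remark and not needed for the stated inequalities.
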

\begin{proof} By Theorem \ref{characterization_uncondition_Scf}, there exist subset $\Lambda_i'\subseteq \Lambda_i$ for all $1\leq i\leq N$ such that $\Gc_N(\phi_i, \Lambda_i')$ forms a frame for $L^2(\R^d)$. It then follows that, by Theorem \ref{summary_density_result}(c), $$D^{-}\bigparen{\cup_{i=1}^N\Lambda_i}\geq D^{-}\bigparen{\cup_{i=1}^N\Lambda_i'}\geq 1.$$ 

Next, we construct an unconditional Schauder frame with infinite Beurling density for which all associated coefficient functionals are nonzero.
    Let $\phi,\Gamma$ be such that $\Gc(\phi, \Gamma)$ forms a frame for $L^2(\R^d)$ and let $\set{\phi^*_n}\inN$ be the canonical dual frame associated with $\Gc_(\phi, \Gamma).$ Note that we have $\sup_n\norm{\phi_n^*}_{L^2(\R^d)}<\infty.$ Let $(K_n)\inN$ be an increasing sequence of positive integer such that $\sumli\frac{1}{K_n}<\infty.$ For each $n\in \N$ we pick $K_n$ distinct points $\set{(a_{n,i},b_{n,i})}_{i=1}^{K_n}$ near $(a_n,b_n)$ such that
    \begin{enumerate}
    \setlength\itemsep{0.5em}
        \item [\textup{(a)}] $\set{(a_{n,i},b_{n,i})}_{i=1}^{K_n}\cap \bigparen{\cup_{j=1}^{i-1} \set{(a_{j,i},b_{j,i})}_{i=1}^{K_j}}=\emptyset$ for all $i\geq 2,$ 
        \item [\textup{(b)}] for each $n\in \N$ we have $\norm{(a_n,b_n)-(a_{n,i},b_{n,i})}< 1/n$ for all $1\leq i\leq K_n,$
        \item [\textup{(c)}] $\norm{M_{b_n}T_{a_n}\phi-M_{b_{n,i}}T_{a_{n,i}}\phi}_{L^2(\R^d)}<(2^{2n}\sup_n\norm{\phi_n^*}_{L^2(\R^d)})^{-1}$ for all $n\in\N$ and all $1\leq i\leq K_n.$ 
    \end{enumerate}
Since $K_n$ increases to $\infty$, it follows that $D^+\bigparen{\cup_{n\in\N}\set{(a_{n,i},b_{n,i})}_{i=1}^{K_n}}=\infty.$ It now remains to show that $\cup\inN\set{M_{b_{n,i}}T_{a_{n,i}}\phi}_{i=1}^{K_n}$ is an unconditional Schauder frame for $L^2(\R^d).$ We define the operator $S\colon L^2(\R^d)\rightarrow L^2(\R^d)$ by 
\begin{equation}
\label{frame_operator_excessive_unsch}
S(x)=\sum_{n\in\N}\sum_{i=1}^{K_n}\ip{x}{K_n^{-1}\phi_n^*}M_{b_{n,i}}T_{a_{n,i}}\phi\quad \text{for all }x\in L^2(\R^d)
\end{equation}
We will show in the following that $S$ is an invertible bounded linear operator. We will also show that the series in Equation \ref{frame_operator_excessive_unsch} converges unconditionally in the norm of $L^2(\R^d).$ Once these two claims are established, it then follows that 
$$x=S(S^{-1}x)\sum_{n\in\N}\sum_{i=1}^{K_n}\ip{x}{(S^{-1})^*K_n^{-1}\phi_n^*}M_{b_{n,i}}T_{a_{n,i}}\phi\quad \text{for all }x\in L^2(\R^d),$$
with the unconditional convergence of the series in the norm of $L^2(\R^d).$ Consequently, $\bigset{M_{b_{n,i}}T_{a_{n,i}}\phi\,|\,1\leq i\leq K_n,\,n\in\N}$ is an unconditional Schauder frame for $L^2(\R^d).$

Since both sequences  $\cup_{n\in\N}\set{K^{-1/2}\phi_n^*}_{i=1}^{K_n}$ and $\cup_{n\in\N}\set{K^{-1/2}M_{b_n}T_{a_n}\phi}_{i=1}^{K_n}$ are Bessel sequences, we see that 
\begin{equation}
\label{original_frame_operator}
\sumli \sum_{i=1}^{K_n}\ip{x}{K^{-1/2}\phi_n^*}K^{-1/2}M_{b_n}T_{a_n}\phi=\sum_{n=1}^\infty \ip{x}{\phi_n^*}M_{b_n}T_{a_n}\phi=x,
\end{equation}
with the unconditional convergence of the series in the norm of $L^2(\R^d).$ By applying condition (c) above and the Cauchy–Bunyakovsky–Schwarz inequality, for each $n\in\N$ we obtain 
\begin{align}
\label{triangle_estimate_I}
\begin{split}
\norm{\ip{x}{\phi_n^*}(M_{b_{n,i}}T_{a_{n,i}}-M_{b_n}T_{a_n})\phi}_{L^2(\R^d)}&\Le \sup_n\norm{\phi_n^*}_{L^2(\R^d)}\norm{x}_{L^2(\R^d)}(2^{2n}\sup_n\norm{\phi_n^*}_{L^2(\R^d)})^{-1}\\
&\Le 2^{-2n}\norm{x}_{L^2(\R^d)}.
\end{split}
\end{align}
By the identity $M_{b_{n,i}}T_{a_{n,i}}\phi=(M_{b_{n,i}}T_{a_{n,i}}\phi-M_{b_n}T_{a_n}\phi)+M_{b_n}T_{a_n}\phi$, Equation (\ref{triangle_estimate_I}) and Triangle Inequality, it follows that, for any positive integers $N<M$ and any $x\in L^2(\R^d)$ with $\norm{x}_{L^2(\R^d)}=1$, we have 
\begin{align*}
\begin{split}
   \Bignorm{\sum_{n=N}^M\sum_{i=1}^{K_n}\ip{x}{K_n^{-1}\phi_n^*}M_{b_{n,i}}T_{a_{n,i}}\phi}_{L^2(\R^d)}\Le  \sum_{n=N}^M2^{-2n}+  \Bignorm{\sum_{n=N}^M\sum_{i=1}^{K_n}\ip{x}{K_n^{-1}\phi_n^*}M_{b_{n}}T_{a_{n}}\phi}_{L^2(\R^d)}.   
    \end{split}
\end{align*}
Consequently, $S(x)$ is a well-defined linear operator. Using the Triangle Inequality again and arguing similarly, we see that $$\norm{(S-I)x}_{L^2(\R^d)}=\Bignorm{\sum_{n\in\N}\sum_{i=1}^{K_n}\ip{x}{K_n^{-1}\phi_n^*}M_{b_{n,i}}T_{a_{n,i}}\phi-\sum_{n=1}^\infty \ip{x}{\phi_n^*}M_{b_n}T_{a_n}\phi}_{L^2(\R^d)}<1,$$
for all $x\in L^2(\R^d)$ with $\norm{x}_{L^2(\R^d)}=1$. Hence, $S$ is an invertible bounded linear operator. Moreover, the series in Equation (\ref{frame_operator_excessive_unsch}) converges unconditionally due to the unconditional convergence of the series in Equation (\ref{original_frame_operator}). \qedhere

\end{proof}

\subsection{Uncertainty Principle} It is known that there exists a certain incompatibility between Gabor systems with ``nice windows" at critical density and ``nice" approximation properties. In light of Theorem \ref{summary_density_result}, a Gabor system $\Gc_{N}(\phi_i,\Lambda_i)$ is said to have critical density if $$D^{-}\bigparen{\cup_{i=1}^N\Lambda_i}=D^{+}\bigparen{\cup_{i=1}^N\Lambda_i}=1.$$
The classical Balian-Low Theorem states that the Gabor system $\Gc(\phi,\Z\times \Z)$ cannot simultaneously be a Riesz basis and have a window function that belongs to the Feichtinger algebra. Therefore, window functions for Gabor Riesz bases $\Gc(\phi,\Z\times \Z)$ cannot exhibit ``good" joint time-frequency localization. For generic sets of time-frequency shifts, Ascensi, Feichtinger and Kaiblinger proved in \cite{AFK14} that no Gabor Riesz basis at critical density can be generated using windows functions from the Feichtinger algebra. In fact, they showed that the same conclusion holds for Gabor frames with critical density. We futher confirm in the following theorem that no Gabor systems at critical density can simultaneously be an unconditional Schauder frame and have window functions in the Feichtinger algebra. We refer to \cite{BHW95},\cite{CP06},\cite{GM13},\cite{AFK14}, and the references therein for further background and recent developments on this topic.  
\begin{theorem}
\label{M1_window_strict_lower_Beurling}
Let $\Gc_N(\phi_i,\Lambda_i)$ be an unconditional Schauder frame for $L^2(\R^d)$. Assume that $\phi_i\in M^1(\R^d)$ for all $1\leq i\leq N$. Then $D^{-}\bigparen{\cup_{i=1}^N\Lambda_i}>1,$  where $\cup_{i=1}^N\Lambda_i$ is the disjoint union of $\Lambda_1,\dots\Lambda_N.$
\end{theorem}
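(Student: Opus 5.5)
The plan is to reduce the statement to the known frame result of Ascensi, Feichtinger and Kaiblinger \cite{AFK14}, using Theorem \ref{characterization_uncondition_Scf} to pass from an unconditional Schauder frame to a genuine frame. Suppose $\Gc_N(\phi_i,\Lambda_i)$ is an unconditional Schauder frame for $L^2(\R^d)$ with $\phi_i\in M^1(\R^d)$. We may assume each $\phi_i\neq 0$; zero windows contribute nothing and can be removed, which only decreases the union. Every element $M_bT_a\phi_i$ has norm $\norm{\phi_i}_{L^2(\R^d)}$, so the system is semi-normalized, with $m=\min_i\norm{\phi_i}$ and $M=\max_i\norm{\phi_i}$. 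By the last part of Theorem \ref{characterization_uncondition_Scf}, it contains a frame. Concretely, there are subsets $\Lambda_i'\subseteq\Lambda_i$ such that $\Gc_N(\phi_i,\Lambda_i')$ is a frame for $L^2(\R^d)$, as already used in the proof of Theorem \ref{density_Gabor_unconditional_sf}.

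One subtlety in the extraction is that the frame produced in (a)$\Rightarrow$(d) is built from representatives $x_{n_k}$ of the classes $\Gamma_{n_k}$ of mutually parallel vectors. Two time-frequency shifts $M_bT_a\phi_i$ and $M_{b'}T_{a'}\phi_j$ could be parallel. Even so, the selected vectors are still elements of the original system, indexed by a subfamily of the disjoint union. So the extracted frame is again a multi-window Gabor system $\Gc_N(\phi_i,\Lambda_i')$ with the same windows and smaller index sets. Consequently $D^{-}(\cup_i\Lambda_i)\ge D^{-}(\cup_i\Lambda_i')$, since lower Beurling density is monotone under inclusion.

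Next, apply the critical-density result. \cite{AFK14} shows that a Gabor frame with windows in $M^1$ cannot sit at critical density. The step needing care is which version we have: for multi-window frames with $M^1$ windows, \cite{AFK14} in fact gives the strict inequality $D^{-}(\cup_i\Lambda_i')>1$, not merely the exclusion of $D^-=D^+=1$. This strictness is the main obstacle. If only the weaker form ``not both densities equal $1$'' were available, it would not suffice, because $D^+$ of the subfamily could exceed $1$. I would first check that the \cite{AFK14} argument actually gives $D^->1$ for multi-window $M^1$ frames. It does so via stability of frames under small perturbations of the lattice-free index set, passing to weak limits and showing that a frame with $D^-=1$ would yield a limit Riesz basis, which is impossible for $M^1$ windows.

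Granting $D^{-}(\cup_i\Lambda_i')>1$, monotonicity gives $D^{-}(\cup_i\Lambda_i)\ge D^{-}(\cup_i\Lambda_i')>1$, which proves Theorem \ref{M1_window_strict_lower_Beurling}. The fourth theorem of the introduction, which rules out $D^-=1$, is then an immediate corollary.
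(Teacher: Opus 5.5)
Your proposal is correct and is essentially the paper's argument, which the paper phrases as a contradiction: extract a sub-frame $\Gc_N(\phi_i,\Lambda_i')$ with $\Lambda_i'\subseteq\Lambda_i$ via Theorem \ref{characterization_uncondition_Scf}, use monotonicity of $D^-$, and cite \cite[Theorem 1.5]{AFK14}, which already gives the strict inequality $D^->1$ for frames with $M^1$ windows, so the re-check you propose is not needed. One small correction to your aside: \cite{AFK14} proves this through stability of frames under small dilations $\Lambda\mapsto\alpha\Lambda$ (a slight dilation with $\alpha>1$ would push $D^-$ below $1$ and contradict the density theorem), not through weak limits and limit Riesz bases.
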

\begin{proof}
    Suppose to the contrary that $D^{-}\bigparen{\cup_{i=1}^N\Lambda_i}\leq 1.$ By Theorem \ref{characterization_uncondition_Scf}, there exist some subsets $\Lambda_i'\subseteq \Lambda_i$ for all $1\leq i\leq N$ such that $\Gc_N(\phi_i,\Lambda_i')$ forms a frame for $H$. However, the assumption implies that $$D^{-}\bigparen{\cup_{i=1}^N\Lambda_i'}\Le D^{-}\bigparen{\cup_{i=1}^N\Lambda_i}\leq 1,$$
    which is a contradiction to \cite[Theorem 1.5]{AFK14}.
\end{proof}
This result is optimal in the sense that one can not expect Theorem \ref{M1_window_strict_lower_Beurling} to hold for window functions that do not all belong to $M^1(\R^d).$ For example, $\Gc(\chi_{[0,1)^d}, \Z^{2d})$ is an orthonormal basis for $L^2(\R^d)$ with $D^{-}(\Lambda)=D^{+}(\Lambda)=1$, while $\chi_{[0,1)^d}\in M^p(\R^d)$ for any $p>1.$

The Bargmann-Fock space $\Fc^2(\C^d)$ is the Hilbert space consisting of all entire functions $f$ on $\C^d$ for which the norm $$\norm{f}_{\Fc^2(\C^d)}^2\Eq \int_{\C^d} |f(z)|e^{-\pi |z|^2}\,dz$$
is finite.
It was shown by Gr\"{o}chenig and Malinnikova in \cite{GM13} that sequences of the form $\set{e^{\pi\bar{\lambda}z}}_{\lambda\in \Lambda}$ is never a Reisz basis for $\Fc^2(\C^d)$ regardless of the choice of $\Lambda\subseteq \C^d$. Ascenci et al.\ then improved this result by showing that $D^{-}(\Lambda)>1$ if $\set{e^{\pi\bar{\lambda}z}}_{\lambda\in \Lambda}$ (see \cite[Remark 1.9]{AFK14}) is a frame for $\Fc^2(\C^d)$.
Let $\phi(x)=2^{\frac d4}e^{-\pi \norm{x}^2}$ be the Gaussian function in $d$ variables and $\Lambda\subseteq \C^d$ be a countable subset. It is known that frames of the form $\set{e^{\pi\bar{\lambda}z}}_{\lambda\in \Lambda}$ (or equivalently, \emph{sampling sequences}) for $\Fc^2(\C^d)$ are unitarily equivalent to Gabor frames of the form $\set{M_{-b}T_a\phi}_{(a+ib)\in \Lambda}$ for $L^2(\R^d)$ via Bargmann transform (for example, see \cite[Section 3.4]{Gro01}).  Using the Theorem \ref{characterization_uncondition_Scf}, Theorem \ref{M1_window_strict_lower_Beurling} and the fact that $\phi\in M^1(\R^d)$, we obtain the following corollary. Here the Beurling density of a countable subset of $\C^d$ is defined by the obvious modification of \ref{Beruling_density}.


\begin{corollary}
    Assume that $\Lambda$ is a countable subset of $\C^{d}$ such that $\set{e^{\pi\bar{\lambda}z}}_{\lambda\in \Lambda}$ forms an unconditional Schauder frame for the Bargmann-Fock space $\Fc^2(\C^d)$. Then $D^{-}(\Lambda)>1.$\qeddef
\end{corollary}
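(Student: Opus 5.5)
The plan is to transfer the problem from the Bargmann–Fock space to $L^2(\R^d)$ with the Bargmann transform, and then apply Theorem \ref{M1_window_strict_lower_Beurling} with $N=1$ and the Gaussian window $\phi(x)=2^{\frac d4}e^{-\pi\norm{x}^2}\in M^1(\R^d)$.

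\emph{Step 1 (reduction to a rescaled frame).} Assume $\set{e^{\pi\bar{\lambda}z}}_{\lambda\in\Lambda}$ is an unconditional Schauder frame for $\Fc^2(\C^d)$. By Theorem \ref{characterization_uncondition_Scf}, (a)$\Rightarrow$(b), there are scalars $(c_\lambda)$ such that $\set{c_\lambda e^{\pi\bar\lambda z}}_{\lambda\in\Lambda}$ is a frame for $\Fc^2(\C^d)$.

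\emph{Step 2 (transport through the Bargmann transform).} Let $\mathcal{B}\colon L^2(\R^d)\to\Fc^2(\C^d)$ be the unitary Bargmann transform. It maps each time-frequency shift $M_{-b}T_a\phi$ to a unimodular multiple of a positive constant times $e^{\pi\bar\lambda z}$, where $\lambda=a+ib$. More precisely, $\mathcal{B}(M_{-b}T_a\phi)=u_\lambda\, e^{-\pi|\lambda|^2/2}e^{\pi\bar\lambda z}$ with $|u_\lambda|=1$ (see \cite[Section 3.4]{Gro01}). Unitary images of frames are frames, so $\set{d_\lambda M_{-b}T_a\phi}_{a+ib\in\Lambda}$ is a frame for $L^2(\R^d)$, where $d_\lambda=c_\lambda\,\overline{u_\lambda}\,e^{\pi|\lambda|^2/2}$. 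By Theorem \ref{characterization_uncondition_Scf}, (b)$\Rightarrow$(a), the Gabor system $\Gc(\phi,\Lambda^*)$ with $\Lambda^*=\set{(a,-b)\,|\,a+ib\in\Lambda}$ is therefore an unconditional Schauder frame for $L^2(\R^d)$.

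\emph{Step 3 (density).} Since $\phi\in M^1(\R^d)$, Theorem \ref{M1_window_strict_lower_Beurling} gives $D^-(\Lambda^*)>1$. Identify $\C^d$ with $\R^{2d}$. The map $(a,b)\mapsto(a,-b)$ is an isometry, so it preserves ball counts and volumes, and hence Beurling densities. Therefore $D^-(\Lambda)=D^-(\Lambda^*)>1$.

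\emph{Main obstacle.} The only delicate point is Step 2: the Bargmann correspondence sends time-frequency shifts to exponentials only up to the nonzero factors $u_\lambda e^{-\pi|\lambda|^2/2}$. The equivalence (a)$\Leftrightarrow$(b) absorbs these factors, because being rescalable to a frame is insensitive to nonzero rescaling. Everything else is bookkeeping: the sign and normalization conventions of $\mathcal{B}$, and the fact that the density convention on $\C^d$ agrees with the one on $\R^{2d}$.
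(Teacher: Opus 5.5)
Your proposal is correct and follows the paper's argument. You transfer the problem through the Bargmann transform, which identifies $e^{\pi\bar\lambda z}$ with $M_{-b}T_a\phi$ up to the nonzero factors you computed, absorb those factors with Theorem \ref{characterization_uncondition_Scf}, and conclude with Theorem \ref{M1_window_strict_lower_Beurling} for the Gaussian window $\phi\in M^1(\R^d)$. A minor simplification: the round trip (a)$\Rightarrow$(b)$\Rightarrow$(a) can be replaced by the elementary fact that unitary images and nonzero rescalings $\set{\alpha_n U x_n}$ of an unconditional Schauder frame are again unconditional Schauder frames, with coefficient functionals $\set{\overline{\alpha_n}^{-1} U y_n}$.
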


Next, we present applications to exponential systems in $L^2(S)$, where $S$ is an arbitrary subset of $\R^d$ with positive finite (Lebesgue) measure. 
\subsection{Exponential Systems} Let $\Lambda$ be a countable subset of $\R^d$. The exponential system associated with $\Lambda$ is the set $$\Ec(\Lambda)=\set{e^{2\pi i\lambda\cdot x}}_{\lambda\in \Lambda}.$$ Landau's necessary density conditions states that $D^{-}(\Lambda)\geq |S|$ whenever $\Ec(\Lambda)$ forms a frame for $L^2(S)$, and that $D^{+}(\Lambda)\leq |S|$ if $\Ec(\Lambda)$ forms a Riesz basis for its closed span. Consequently, $D^{-}(\Lambda)=D^{+}(\Lambda)=|S|$ if $\Ec(\Lambda)$ is a Riesz basis for $L^2(S).$ As in the case of Gabor systems, it remains unknown how small $D^-(\Lambda)$ can be when $\Ec(\Lambda)$ forms an unconditional Schauder frame for $L^2(S).$ By Theorem \ref{characterization_uncondition_Scf}, we obtain the following necessary density condition for such exponential systems. We remark that, by an argument similar to that used in Theorem \ref{density_Gabor_unconditional_sf}, one can also construct $\Lambda\subseteq\R^d$ with $D^+(\Lambda)=\infty$ such that $\Ec(\Lambda)$ forms an unconditional Schauder frame that possess coefficient functionals that are all nonzero.

\begin{theorem} Let $S\subseteq \R^d$ be a subset with positive finite measure. Assume that $\Ec(\Lambda)$ is an unconditional Schauder frame for $L^2(\R^d)$. Then $|S|\leq D^{-}\bigparen{\Lambda}\leq D^{+}\bigparen{\Lambda}\leq \infty.$ 
\end{theorem}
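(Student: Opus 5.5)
The statement has an evident typo: $\Ec(\Lambda)$ should be an unconditional Schauder frame for $L^2(S)$, not $L^2(\R^d)$. I prove that version. The plan mirrors the proof of Theorem \ref{density_Gabor_unconditional_sf}: pass from the unconditional Schauder frame to a frame subsystem via Theorem \ref{characterization_uncondition_Scf}, then apply Landau's necessary density condition to that subsystem.

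\textbf{Step 1: extracting a frame.} Each exponential $e^{2\pi i\lambda\cdot x}$ has norm $|S|^{1/2}$ in $L^2(S)$, so $\Ec(\Lambda)$ is semi-normalized with $m=M=|S|^{1/2}$. By the final assertion of Theorem \ref{characterization_uncondition_Scf}, $\Ec(\Lambda)$ contains a frame for $L^2(S)$. Concretely, there is a subset $\Lambda'\subseteq\Lambda$ such that $\Ec(\Lambda')$ is a frame for $L^2(S)$.

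One point needs care. The theorem is stated for a sequence indexed by $\N$, whereas $\Ec(\Lambda)$ is a set. If $\Lambda$ were a multiset, the extracted subsequence could repeat elements. Here $\Lambda$ is a countable subset of $\R^d$, so distinct $\lambda$ give distinct vectors, and the extracted frame is genuinely indexed by a subset $\Lambda'\subseteq\Lambda$. Moreover, rescaling by the constant $|S|^{-1/2}$ does not affect the frame property.

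\textbf{Step 2: Landau's condition.} Landau's necessary density condition, quoted just before the theorem, applies to $\Ec(\Lambda')$ and gives $D^{-}(\Lambda')\ge |S|$.

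\textbf{Step 3: monotonicity of density.} Lower Beurling density is monotone under inclusion. For every ball, $\#(\Lambda'\cap B_r(x))\le \#(\Lambda\cap B_r(x))$; take the infimum over $x$ and then the $\liminf$ in $r$. This yields $|S|\le D^{-}(\Lambda')\le D^{-}(\Lambda)$.

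\textbf{Step 4: the remaining inequalities.} The inequality $D^-(\Lambda)\le D^+(\Lambda)$ holds for every set, since the infimum over centres is at most the supremum and the $\liminf$ is at most the $\limsup$. The bound $D^+(\Lambda)\le\infty$ is trivial.

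No step is a genuine obstacle. The only point requiring attention is the identification in Step 1, namely that the frame produced by Theorem \ref{characterization_uncondition_Scf} is a subfamily of $\Ec(\Lambda)$ indexed by a subset of $\Lambda$, so that the density comparison in Step 3 is legitimate.
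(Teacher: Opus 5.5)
Your proof is correct and follows the paper's own route. The paper justifies the theorem only by "By Theorem~\ref{characterization_uncondition_Scf}," which amounts to your steps: extract a frame subsystem $\Ec(\Lambda')$ with $\Lambda'\subseteq\Lambda$ (possible because the exponentials are semi-normalized, each of norm $|S|^{1/2}$ in $L^2(S)$), apply Landau's condition to get $D^-(\Lambda')\geq|S|$, and use monotonicity of $D^-$, exactly as in the first part of the proof of Theorem~\ref{density_Gabor_unconditional_sf}; your reading of $L^2(\R^d)$ as a typo for $L^2(S)$ is also right.
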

It was an open problem whether every $L^2(S)$ admits a Riesz basis of exponential system. The first example of a set $S$(in fact, a compact set) for which $L^2(S)$ does not admit any Riesz basis of exponentials was constructed by Kozma, Nitzan and Olevskii in \cite{KNO23}. Unlike Riesz bases, every $L^2(S)$ admits an exponential frame. It is then natural to ask whether every $L^2(S)$ admits an exponential frame $\Ec(\Lambda)$ with $D^{-}(\Lambda)=|S|.$ Recently, Enstad and van Velthoven proved that the compact set $S$ constructed by Kozma et al.\ in \cite{KNO23} does not admit any exponential frame $\Ec(\Lambda)$ with $D^{-}(\Lambda)=|S|.$ We further show that the same conclusion holds for unconditional Schauder frames of exponentials.

Combining Theorem \ref{characterization_uncondition_Scf} with \cite[Theorem 1.1(i)]{EV25}, we obtain the following theorem. Recall that we say that a sequence of subsets of $\R^d$, $\set{\Lambda_n}\inN$, \emph{converges weakly} to $\Lambda\subseteq \R^d$ if for every $\epsilon>0$ and every $r>0$ there exists $N\in\N$ such that $$\Lambda_n\cap B_{r}(0)\subseteq \Lambda+B_{\epsilon}(0)\quad \text{and} \quad \Lambda \cap B_{r}(0)\subseteq \Lambda_n+B_{\epsilon}(0),$$
for all $n\geq N.$ A subset $\Lambda'$ is said to be a weak limit of translates of $\Lambda\subseteq \R^d$ if there exists some sequence of scalars $(a_n)\inN$ such that $\Lambda-a_n$ converges weakly to $\Lambda'.$ We denote by $W(\Lambda)$ the set of all weak limits of translates of $\Lambda\subseteq \R^d$. For any subsets $M,N$ of $\R^d$, $M+N$ and $M-N$ denote the usual Minkowski sum and difference, respectively. 
\begin{theorem} Let $S\subseteq \R^d$ be a subset with positive finite measure and let $\Lambda \subseteq \R^d$ be a uniformly discrete subset. Assume that $\Ec(\Lambda)$ is an unconditional Schauder frame for $L^2(S)$ with $D^-(\Lambda)=|S|$. Then there exists some subset $\Lambda'\subseteq\Lambda$ and some subset $\Gamma\subseteq W(\Lambda')$ such that $\Ec(\Gamma)$ is a Riesz basis for $L^2(S).$
\end{theorem}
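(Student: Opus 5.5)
The plan is to reduce the statement to the frame case via Theorem \ref{characterization_uncondition_Scf} and then quote \cite[Theorem 1.1(i)]{EV25}.

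First, since $\Ec(\Lambda)$ is an unconditional Schauder frame for $L^2(S)$, the implication (a)$\Rightarrow$(d) of Theorem \ref{characterization_uncondition_Scf} yields a subset of $\Lambda$ whose normalized exponentials form a frame for $L^2(S)$. Every exponential $e^{2\pi i\lambda\cdot x}$ has norm $|S|^{1/2}$ in $L^2(S)$, so the system is semi-normalized (in fact normalized up to a constant). The last assertion of Theorem \ref{characterization_uncondition_Scf} therefore gives a subset $\Lambda'\subseteq\Lambda$ such that $\Ec(\Lambda')$ itself is a frame for $L^2(S)$. Moreover, $\Lambda'$ is uniformly discrete, being a subset of the uniformly discrete set $\Lambda$.

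Second, I would control the density of $\Lambda'$. Monotonicity of the lower Beurling density under inclusion gives $D^-(\Lambda')\le D^-(\Lambda)=|S|$. Landau's necessary condition, applied to the frame $\Ec(\Lambda')$, gives $D^-(\Lambda')\ge|S|$. Together these force $D^-(\Lambda')=|S|$.

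Third, $\Ec(\Lambda')$ is now a uniformly discrete exponential frame for $L^2(S)$ at the critical lower density. This is precisely the setting of \cite[Theorem 1.1(i)]{EV25}, which yields a set $\Gamma\in W(\Lambda')$, i.e.\ a weak limit of translates of $\Lambda'$, with $\Ec(\Gamma)$ a Riesz basis for $L^2(S)$. That is the asserted conclusion, with the statement's ``$\Gamma\subseteq W(\Lambda')$'' read as $\Gamma$ being an element of $W(\Lambda')$.

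The main obstacle is checking that the hypotheses of \cite[Theorem 1.1(i)]{EV25} match exactly. That result may require the frame to be indexed by a set rather than a sequence with repetitions; this is harmless here, since $\Lambda'$ is a genuine subset of $\Lambda$ and the passage in Theorem \ref{characterization_uncondition_Scf} selects one representative from each collinear class. It may also require $S$ to be bounded or compact, or $\Lambda$ relatively separated. If boundedness of $S$ is needed, I would add it as a hypothesis or restrict to the case treated there. The other steps are routine consequences of earlier results.
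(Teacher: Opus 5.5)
Your proposal is correct and matches the paper's proof: use Theorem \ref{characterization_uncondition_Scf} in the semi-normalized case to extract a frame $\Ec(\Lambda')$ with $\Lambda'\subseteq\Lambda$, get $D^-(\Lambda')=|S|$ from monotonicity and Landau's necessary condition, and then apply \cite[Theorem 1.1(i)]{EV25}. Your reading of $\Gamma$ as an element of $W(\Lambda')$ is the right one, and so is stating $D^-(\Lambda')=|S|$ where the paper's proof writes $D^-(\Lambda)$.
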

\begin{proof}
    Assume that $D^-(\Lambda)=|S|$. Then by Theorem \ref{characterization_uncondition_Scf} and Landau's necessary density condition, there exists some subset $\Lambda'\subseteq \Lambda$ with $D^-(\Lambda)=|S|$ such that $\Ec(\Lambda')$ forms a frame for $L^2(S).$ The statement then follows by \cite[Theorem 1.1(i)]{EV25}.
\end{proof}
Arguing similarly to the one used in \cite[Theorem 1.2(i)]{EV25}, we obtain the following result.
\begin{theorem} There exists some compact subset $S\subseteq \R$ of positive measure such that for any countable subset $\Lambda$
with $D^{-}(\Lambda)=|S|$ the exponential system $\Ec(\Lambda)$ is never an unconditional Schauder frame for $L^2(S)$.\qeddef
\end{theorem}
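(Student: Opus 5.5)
The plan is to reduce the statement to the frame case via Theorem \ref{characterization_uncondition_Scf} and then use the obstruction of Enstad and van Velthoven (\cite[Theorem 1.2(i)]{EV25}), which is built on the Kozma--Nitzan--Olevskii set. Let $S\subseteq\R$ be the compact set of positive measure from \cite{KNO23}, as used in \cite{EV25}; by that result, $L^2(S)$ admits no exponential frame $\Ec(\Lambda')$ with $D^-(\Lambda')=|S|$. I will show that this same $S$ works for unconditional Schauder frames.

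Suppose, for contradiction, that $\Lambda\subseteq\R$ is countable with $D^-(\Lambda)=|S|$ and that $\Ec(\Lambda)$ is an unconditional Schauder frame for $L^2(S)$. Every exponential has norm $\norm{e^{2\pi i\lambda x}}_{L^2(S)}=|S|^{1/2}$, so the system is semi-normalized. The final clause of Theorem \ref{characterization_uncondition_Scf} therefore gives a subset $\Lambda'\subseteq\Lambda$ such that $\Ec(\Lambda')$ is a frame for $L^2(S)$. (One should note that distinct $\lambda$ give linearly independent exponentials, so the subsequence in (d) genuinely indexes a subset $\Lambda'$ of $\Lambda$.)

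Next I compare densities. Monotonicity gives $D^-(\Lambda')\le D^-(\Lambda)=|S|$, and Landau's necessary condition for frames gives $D^-(\Lambda')\ge|S|$. Hence $D^-(\Lambda')=|S|$, so $\Ec(\Lambda')$ is an exponential frame for $L^2(S)$ at critical lower density. This contradicts \cite[Theorem 1.2(i)]{EV25}.

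The main obstacle is checking that the cited result of \cite{EV25} really covers arbitrary frames with $D^-(\Lambda')=|S|$. If it is stated only for uniformly discrete $\Lambda'$, I would reduce to that case. A frame has bounded upper density, so $\Lambda'$ is a finite union of uniformly discrete sets. Since Landau's condition applies to each frame, removing points so that at least the frame property persists is the first thing I would try; failing that, I would follow the argument of \cite{EV25} directly. That argument passes to weak limits of translates, obtains a Riesz basis $\Ec(\Gamma)$ for $L^2(S)$ as in the preceding theorem, and contradicts \cite{KNO23}. The preceding theorem (with $\Lambda'$ in place of $\Lambda$) supplies exactly this step.
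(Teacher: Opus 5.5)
Your proposal is correct and matches the argument the paper intends. You extract a subframe $\Ec(\Lambda')$ using the semi-normalized case of Theorem \ref{characterization_uncondition_Scf}, force $D^-(\Lambda')=|S|$ by monotonicity and Landau's condition, and apply the Enstad--van Velthoven obstruction for the Kozma--Nitzan--Olevskii set. Your worry about uniform discreteness is harmless: since $S$ is bounded, a Bernstein-inequality argument shows that a maximal $\delta$-separated subset of the relatively separated sampling set $\Lambda'$ remains sampling for small $\delta$, and it still has lower density $|S|$, so the preceding theorem and the Kozma--Nitzan--Olevskii result finish the proof.
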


Finally, we conclude this paper with an application to iterative systems.
\subsection{Frame-normalizability of iterative systems generated by normal operators} Let $A\colon H\rightarrow H$ be a (bounded) normal operator on $H$ and let $S\subseteq H$ be a countable subset. It was conjectured by Cabrelli et al.\ in \cite{ACMCP17} that the iterative system $\set{A^nx}_{x\in S,\,n\geq 0}$ can never be normalized to form a frame for $H.$ That is, $\bigset{\frac{A^nx}{\norm{A^nx}}}_{x\in S,\,n\geq 0}$ is never a frame for $H.$ For research papers related to this conjecture, we refer to \cite{ACMT17}, \cite{ACMCP17}, \cite{PY24} and \cite{PY25d}. 
This conjecture remains open at the time of writing. Even surprisingly, it is still unknown whether there exist some normal operator $A$, an element $x\in H$ and some infinite subset $\Gamma\subseteq \N\cup\set{0}$ such that $\bigset{\frac{A^nx}{\norm{A^nx}}}_{n\in \Gamma}$ forms a frame for $H.$ As a corollary of Theorem \ref{characterization_uncondition_Scf}, we establish the existence of such $A,\Gamma$ and element $x\in H$ in the following theorem. In particular, this result also shows that the conjecture by Cabrelli et al.\ is false if one relax the set iterations from $\N\cup\set{0}$ to any arbitrary infinite subset of $\N\cup\set{0}$. 
\begin{theorem}
    There exist some normal operator $A\colon H\rightarrow H$, some $x\in H$ and some infinite subset $\Gamma\subseteq \N\cup\set{0}$ such that $\bigset{\frac{A^nx}{\norm{A^nx}}}_{n\in \Gamma}$ forms a frame for $H.$ 
\end{theorem}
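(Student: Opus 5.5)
The plan is to produce an unconditional Schauder frame of the form $\{A^n x\}_{n\ge 0}$ and then extract a normalized frame from it using Theorem \ref{characterization_uncondition_Scf}. By (a)$\Rightarrow$(d), it suffices to find a normal operator $A$ and a vector $x$ such that $\{c_nA^nx\}_{n\geq 0}$ is a frame for $H$ for some sequence of scalars $(c_n)$. Statement (d) then gives a subset $\Gamma\subseteq\N\cup\{0\}$ for which $\bigl\{\frac{A^nx}{\norm{A^nx}}\bigr\}_{n\in\Gamma}$ is a frame. Since $H$ is infinite-dimensional and a frame for it is infinite, $\Gamma$ is automatically infinite.

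For the construction, take $H=\ell^2(\N)$, or any separable $H$ identified with it. Let $A$ be the diagonal operator $Ae_k=\lambda_ke_k$ for distinct $\lambda_k$ with $|\lambda_k|\le 1$; such an operator is normal. Pick $x=\sum_k \alpha_k e_k$ with all $\alpha_k\neq 0$. The cleanest route I know is the known positive result of Aldroubi, Cabrelli, Molter and Tang \cite{ACMT17} (and related work cited above): for a diagonal $A$ with eigenvalues $\lambda_k\in\mathbb D$ forming a uniformly separated (Carleson) sequence in the disk, with $|\lambda_k|\to 1$, and with $x$ chosen as $\alpha_k=\sqrt{1-|\lambda_k|^2}$, the orbit $\{A^nx\}_{n\ge 0}$ is a frame for $H$. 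Their argument identifies $\sum_n|\langle y,A^nx\rangle|^2$ with $\|\sum_k \bar y_k\alpha_k k_{\lambda_k}\|^2_{H^2}$, where $k_\lambda$ are Szegő kernels, and uses that normalized reproducing kernels at a Carleson sequence form a Riesz sequence in the Hardy space. If instead one wants to avoid citing this, the fallback is to prove only the weaker statement actually needed. Then it is enough to show directly that $\{A^nx\}$ is an unconditional Schauder frame, by exhibiting coefficient functionals $y_n$ with $y=\sum_n\langle y,y_n\rangle A^nx$ converging unconditionally. A natural choice builds the $y_n$ from the biorthogonal system to the Szegő kernels, again via the Carleson condition. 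In either form, the hypothesis (a) or (b) of Theorem \ref{characterization_uncondition_Scf} is verified with $c_n\equiv 1$.

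Finally, apply Theorem \ref{characterization_uncondition_Scf} (b)$\Rightarrow$(d) to the sequence $\{A^nx\}_{n\ge0}$. This yields an index subset $\Gamma$, with no two selected vectors collinear, such that the normalized vectors form a frame, and this is exactly the claim.

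I expect the main obstacle to be the existence step: producing a normal $A$ and a vector $x$ whose orbit, after some rescaling, is a frame. The reduction from (b) to (d) is immediate from Theorem \ref{characterization_uncondition_Scf}. So the real content is choosing the spectrum carefully, meaning a Carleson sequence accumulating only at the unit circle, and verifying the two frame inequalities through the Hardy-space reproducing-kernel identification. Here I would lean on \cite{ACMT17} rather than reprove the Carleson interpolation estimates.
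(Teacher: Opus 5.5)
Your proposal is correct and follows the paper's route: exhibit a normal $A$ and a vector $x$ with $\set{A^nx}_{n\ge 0}$ itself a frame, then invoke Theorem \ref{characterization_uncondition_Scf} to pass to a normalized frame subsequence. The only differences are minor: the paper cites \cite[Theorem 5.7]{ACMCP17} for the existence step where you cite the Carleson-sequence/Szeg\H{o}-kernel characterization from \cite{ACMT17}, and you make explicit that $\Gamma$ is infinite because a frame for an infinite-dimensional $H$ cannot be finite, which the paper leaves implicit.
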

\begin{proof}
    Since every frame for $H$ is an unconditional Schauder frame, by Theorem \ref{characterization_uncondition_Scf}, it remains to find some normal operator and some $x\in H$ such that $\set{A^nx}_{n\geq0}$ is a frame for $H.$ Such characterizations can be found in \cite[Theorem 5.7]{ACMCP17} (See also \cite{CMPP20}, \cite{AP17} and \cite{ACNP25}).
\end{proof}

\section*{acknowledgement}
We thank Marcin Bownik for fruitful discussions on this project.

\end{document}